\ProvideTextCommand{\DJ}{OT1}{\raisebox{0.25ex}{-}\kern-0.4em D}
\newcommand{\setto}{\rightrightarrows}
\newcommand{\Fix}{\operatorname{Fix}}
\newtheorem{theorem}{Theorem}[section]
\newtheorem{proposition}[theorem]{Proposition}
\newtheorem{definition}[theorem]{Definition}
\newtheorem{fact}[theorem]{Fact}
\declaretheorem[style=remark,qed=$\Diamond$,Refname={Remark,Remarks},sibling=theorem]{remark}
\declaretheorem[style=remark,qed=$\Diamond$,Refname={Example,Examples},sibling=theorem]{example}
\Crefname{assumption}{Assumption}{Assumptions}
\newtheorem{formulation}[theorem]{Formulation}
\Crefname{formulation}{Formulation}{Formulations}
\title{A feasibility approach for constructing combinatorial designs of circulant type}
\author{Francisco J. Arag\'on Artacho\thanks{Department of Mathematics, University of Alicante, \textsc{Spain}. Email:~\href{mailto:francisco.aragon@ua.es}{francisco.aragon@ua.es}}
	   \and
	   Rub\'en Campoy\thanks{Department of Mathematics, University of Alicante, \textsc{Spain}.
	   	                     Email:~\href{mailto:ruben.campoy@ua.es}{ruben.campoy@ua.es}}
	   \and
	   Ilias Kotsireas\thanks{CARGO Lab,
	   	                     Wilfrid Laurier University, \textsc{Canada}.
                        	 Email:~\href{mailto:ikotsire@wlu.ca}{ikotsire@wlu.ca}}
	   \and
	   Matthew K. Tam\thanks{Inst.\ for Num.\ and Appl.\ Math.,
	   	                     University of G\"ottingen, \textsc{Germany}.
	   	                     Email:~\href{mailto:m.tam@math.uni-goettingen.de}{m.tam@math.uni-goettingen.de}}
	   }
\begin{document}

\maketitle

\begin{center}
  \emph{Dedicated to Jonathan M. Borwein}
\end{center}

\begin{abstract}
	In this work, we propose an optimization approach for constructing various classes of circulant combinatorial designs that can be defined in terms of autocorrelations. The problem is formulated as a so-called feasibility problem having three sets, to which the Douglas--Rachford projection algorithm is applied. The approach is illustrated on three different classes of circulant combinatorial designs: circulant weighing matrices, D-optimal matrices, and Hadamard matrices with two circulant cores. Furthermore, we explicitly construct two new circulant weighing matrices, a $CW(126,64)$ and a $CW(198,100)$, whose existence was previously marked as unresolved in the most recent version of Strassler's table.
\end{abstract}

\paragraph{Keywords.} Strassler's table $\cdot$ circulant weighing matrices $\cdot$ circulant combinatorial designs $\cdot$ Douglas--Rachford algorithm

\paragraph{MSC 2010.} 05B20 $\cdot$ 90C59 $\cdot$ 47J25 $\cdot$ 47N10

\section{Introduction}
The notion of \emph{autocorrelation} associated with a finite sequence is a unifying concept that allows several classes of \emph{combinatorial designs of circulant type} to be concisely described. Designs of this type can be represented in terms of circulant matrices formed from finite sequences whose autocorrelation coefficients satisfy certain constancy properties; such sequences are called \emph{complementary sequences}. Examples of these designs include certain \emph{D-optimal matrices}, \emph{Hadamard matrices} and \emph{circulant weighing matrices} amongst  many other possibilities. A precise summary describing several of these designs, the associated sequences and their autocorrelation properties, can be found in \cite[Table~1]{kotsireas2013algorithms}. For an encyclopedic reference on autocorrelation properties and complementary sequences more generally, see \cite{Seberry:2017,SeberryYamada}, and for an authoritative reference on combinatorial designs, see \cite{HandbookCD}.

Many combinatorial designs can be defined as matrices of a given class which attain certain determinantal bounds. For instance, D-optimal and Hadamard matrices of a given order are precisely the $\{ \pm 1 \}$-matrices whose determinant is maximal among all other such matrices of the same order \cite{Brent,Horadam:book,Stinson:book}. For this reason, combinatorial designs arise in various fields where the determinantal bounds give rise to  ``best possible" or ``optimal" objects. Specific applications include coding theory \cite{Arasu:Aaron,book:Sala:Sakata}, quantum computing \cite{Flammia,WimVanDam}, wireless communication, cryptography and radar \cite{book:Golomb:Gong}. In many such applications, precise knowledge of the relevant combinatorial design is required.

In order to explicitly construct combinatorial designs of non-trivial orders, it is necessary to exploit underlying structure. Some possibilities include an appropriate group theoretic structure through which the mathematical analysis can proceed, or an efficient representation which is amenable to search algorithms such as metaheuristics. In this paper, we consider a novel approach closer in spirit to the latter. More precisely, we purpose the \emph{Douglas--Rachford algorithm (DRA)} from continuous (convex) optimization as a search heuristic. In this context, the DRA is a deterministic algorithm which traverses the combinatorial search space and which can be described in terms of a fixed-point iteration built from \emph{nearest point projection operators}. The critical feature of the DRA, which allows for its efficient implementation in this context, is that the autocorrelation function gives rise to a projection operator which can be efficiently computed. Although we will not touch further on it here, we note that there are several other works which consider application of the DRA to combinatorial problems, both from theoretical and experimental perspectives; see \cite{artacho2014recent,artacho2014douglas,artacho2016global,artacho2016solving,bauschke2017finite,elser2007searching,gravel2008divide}.

The remainder of this paper is organized as follows. In Section~\ref{sec:preliminaries}, we recall the necessary background regarding the Douglas--Rachford algorithm as well as other key results needed in the sequel. In Section~\ref{sec:model}, we give our \emph{feasibility problem} model for general combinatorial designs of circulant type before specializing it to \emph{circulant weighing matrices}, \emph{D-optimal designs of circulant-type} and \emph{double circulant core Hadamard matrices}. Finally, in Section~\ref{sec:results}, we provide computation results to illustrate the potential of the approach. In addition, in Theorem~\ref{th:cw existence}, we provide two circulant weighing matrices, a $CW(126,64)$ and a $CW(198,100)$, whose existence was unresolved in the latest version of \emph{Strassler's table} \cite[Appendix~A]{Tan-Strassler}.

\section{Preliminaries}\label{sec:preliminaries}

\subsection{The Douglas--Rachford algorithm}
Let $C_1,\,C_2,\dots,C_m$ be a collection of closed subsets in $\mathbb{R}^n$ with nonempty intersection. The corresponding \emph{($m$-set) feasibility problem} is
\begin{equation}\label{eq:feasibility problem}
\text{find~}x\in \bigcap_{j=1}^mC_j.
\end{equation}
Any feasibility problem of the form \eqref{eq:feasibility problem} can always be reformulated using Pierra's \emph{product-space reformulation}~\cite{Pierra} as an equivalent two set feasibility problem in the product Hilbert space $(\mathbb{R}^n)^m:=\mathbb{R}^n\times\stackrel{(m)}{\cdots}\times\mathbb{R}^n$. More precisely, the equivalence can be stated as
\begin{equation}\label{eq:2set feasibility problem}
x\in \bigcap_{j=1}^mC_j\subseteq\mathbb{R}^n \iff (x,x,\dots,x)\in C\cap D\subseteq(\mathbb{R}^n)^m:=\mathbb{R}^n\times\stackrel{(m)}{\cdots}\times\mathbb{R}^n,
\end{equation}
where the constraints $C$ and $D$, both subsets of $(\mathbb{R}^n)^m$, are defined to be
\begin{equation}\label{eq:product sets}
\begin{aligned}
C &:= \left\{(c_1,c_2,\dots,c_m):c_j\in C_j,\,j=1,2,\dots,m\right\}, \\
D &:= \left\{(x,x,\dots,x):x\in\mathbb{R}^n\right\}.
\end{aligned}
\end{equation}

The \emph{Douglas--Rachford algorithm} is an iterative method designed to solve two set feasibility problems (\emph{i.e.,}~\eqref{eq:feasibility problem} with $m=2$) and thus the equivalence in \eqref{eq:2set feasibility problem} is crucial for its application to finitely many-set problems.
 Given two subsets $A$ and $B$ of a Hilbert space $\mathcal{H}$, the algorithm can be compactly described as the fixed point iteration corresponding to the set-valued operator $T_{A,B}:\mathcal{H}\setto \mathcal{H}$ defined by
$$T_{A,B}:=\frac{I+R_BR_A}{2}=I+P_BR_A-P_A,$$
where $P_A:\mathcal{H}\setto A$ denotes the (potentially set-valued) \emph{projector} onto $A$ defined by
$$ P_A(x):=\left\{a\in A:\|x-a\|\leq\|x-a'\|,\forall\, a'\in A\right\},$$
and $R_A:=2P_A-I$ denotes the \emph{reflector} with respect to $A$. In other words, given an initial point $x_0\in \mathcal{H}$, the algorithm defines a sequence $(x_n)_{n=0}^\infty$ according to
\begin{equation}\label{eq:DR iteration}
x_{n+1}\in T_{A,B}(x_n)=\left\{x_n+b_n-a_n\in \mathcal{H}:a_n\in P_A(x_n),\,b_n\in P_B(2a_n-x_n)\right\}.
\end{equation}
We remark that the set-valuedness of $T_{A,B}$ arises from the fact that nearest points to a non-convex set need not be unique. In fact, the \emph{Motzkin--Bunt theorem} states that (in finite dimensions) the class of sets having everywhere unique nearest points are precisely those which are nonempty, closed and convex \cite[Theorem~9.2.5]{BorweinLewis}.

In order to apply the Douglas--Rachford algorithm it is necessary to have an efficient method for computing the projectors onto the individual sets. For the product-space feasibility problem specified by \eqref{eq:product sets}, this is the case whenever the projectors onto the underlying constraint sets in~\eqref{eq:feasibility problem} can be efficiently computed. This is summarized in the following proposition.
\begin{proposition}[Product-space projectors {\cite[Proposition~3.1]{artacho2014recent}}]
Suppose that $C_1,\dots,C_m$ are nonempty and closed subsets of $\mathbb{R}^n$. The projectors onto the sets $C$ and $D$ in \eqref{eq:product sets} are given, respectively, by
  $$ P_C((x_j)_{i=1}^m)=P_{C_1}(x_1)\times P_{C_2}(x_2)\times\dots\times P_{C_m}(x_m)\quad\text{and}\quad P_D((x_j)_{i=1}^m) = \left(\frac{1}{m}\sum_{i=1}^mx_i\right)_{j=1}^m. $$
\end{proposition}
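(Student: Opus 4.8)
The plan is to prove this proposition by reducing everything to the defining property of the metric projection: $a \in P_A(x)$ if and only if $a \in A$ and $\|x - a\| \le \|x - a'\|$ for every $a' \in A$. Since both $C$ and $D$ are subsets of the product space $(\mathbb{R}^n)^m$ equipped with the natural inner product $\langle (x_j), (y_j) \rangle = \sum_{j=1}^m \langle x_j, y_j \rangle$, the squared distance from a point $(x_j)_{j=1}^m$ to a candidate point splits as a sum of squared distances in the individual factors. This separability is the whole engine of the argument.

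First I would handle $P_C$. Fix $(x_j)_{j=1}^m \in (\mathbb{R}^n)^m$ and let $(c_j)_{j=1}^m \in C$, so that $c_j \in C_j$ for each $j$. Then
\begin{equation*}
\|(x_j)_{j=1}^m - (c_j)_{j=1}^m\|^2 = \sum_{j=1}^m \|x_j - c_j\|^2,
\end{equation*}
and since the sets $C_j$ are nonempty and closed (so each $P_{C_j}(x_j)$ is nonempty), each summand $\|x_j - c_j\|^2$ is independently minimized over $c_j \in C_j$ by taking $c_j \in P_{C_j}(x_j)$. Because there is no coupling between the coordinates, the sum is minimized exactly when every coordinate is, which gives $P_C((x_j)_{j=1}^m) = P_{C_1}(x_1) \times \dots \times P_{C_m}(x_m)$. (One should note that closedness of $C_j$ guarantees the projections are nonempty, and the product of closed sets is closed, so $C$ is closed and the claim is not vacuous.)

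Next I would handle $P_D$. Here a point of $D$ has the form $(x, x, \dots, x)$ with $x \in \mathbb{R}^n$, so minimizing $\|(x_j)_{j=1}^m - (x,\dots,x)\|^2 = \sum_{j=1}^m \|x_j - x\|^2$ over $x \in \mathbb{R}^n$ is an unconstrained strictly convex quadratic minimization. Setting the gradient to zero, $\sum_{j=1}^m (x - x_j) = 0$, yields the unique minimizer $x = \frac{1}{m}\sum_{j=1}^m x_j$, hence $P_D((x_j)_{j=1}^m) = \bigl(\frac{1}{m}\sum_{j=1}^m x_j\bigr)_{j=1}^m$; alternatively one recognizes $D$ as a linear subspace and computes the orthogonal projection directly, or invokes the standard variance decomposition $\sum_j \|x_j - x\|^2 = \sum_j \|x_j - \bar{x}\|^2 + m\|\bar{x} - x\|^2$ with $\bar{x}$ the mean.

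I do not expect any real obstacle here: the result is essentially bookkeeping built on the separability of the squared norm over the product space, and the only points requiring a word of care are (i) confirming nonemptiness of $P_{C_j}(x_j)$ from closedness so that the componentwise minimizers exist, and (ii) being careful that $P_C$ is genuinely set-valued (a Cartesian product of the coordinate projection sets, not a single tuple) whereas $P_D$ is single-valued because $D$ is a closed convex subspace. Since the proposition is quoted from \cite{artacho2014recent}, the proof can legitimately be kept to this short verification.
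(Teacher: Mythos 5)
Your proof is correct. The paper itself gives no proof of this proposition --- it is quoted directly from \cite[Proposition~3.1]{artacho2014recent} --- and your verification (separability of the squared product-space norm for $P_C$, and the strictly convex quadratic minimization, or equivalently the orthogonal projection onto the diagonal subspace, for $P_D$) is exactly the standard argument one would supply, with the right points of care noted regarding nonemptiness and set-valuedness.
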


Having discussed implementability of the DRA, we now turn our attention to its behavior. Our first observation is the following corespondence between \emph{fixed points} of the operator $T_{A,B}$ and points in $A\cap B$.
\begin{fact}[Fixed points of $T_{A,B}$]
  Let $A$ and $B$ be nonempty closed subsets of $\mathcal{H}$. If $x\in\Fix T_{A,B}:=\{x\in \mathcal{H}:x\in T_{A,B}(x)\}$, then there is a point $a\in P_A(x)$ such that $a\in A\cap B$.
\end{fact}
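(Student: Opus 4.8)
The plan is to simply unwind the definition of $\Fix T_{A,B}$ using the explicit componentwise description of the Douglas--Rachford operator recorded in \eqref{eq:DR iteration}. Assume $x\in\Fix T_{A,B}$, i.e.\ $x\in T_{A,B}(x)$. Reading \eqref{eq:DR iteration} with $x_n:=x$, this membership means precisely that there exist points $a\in P_A(x)$ and $b\in P_B(2a-x)$ such that $x=x+b-a$. Note that it is the second form $T_{A,B}=I+P_BR_A-P_A$ (rather than $\tfrac{1}{2}(I+R_BR_A)$) that makes these two auxiliary points directly available; and, importantly, the hypothesis itself supplies $a$ and $b$, so no appeal to proximinality of $A$ or $B$ is needed.

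The one substantive observation is then that the fixed-point equation $x=x+b-a$ forces $b=a$. Since $b\in P_B(2a-x)$ and the projector $P_B$ takes values in $B$, we conclude $a=b\in B$. On the other hand, $a\in P_A(x)\subseteq A$ by definition of the projector. Hence $a\in A\cap B$, while at the same time $a\in P_A(x)$, which is exactly the assertion of the Fact.

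There is essentially no obstacle here: the statement is a direct consequence of the definition of the iteration, and the only point worth flagging is the bookkeeping choice of which equivalent expression for $T_{A,B}$ to expand. If desired, one can also remark that the converse-type inclusion fails in general (a point of $A\cap B$ need not be a fixed point unless, e.g., $A$ or $B$ is affine), but that is not required for the present statement.
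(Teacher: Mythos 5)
Your argument is correct and coincides with the paper's own proof: both unwind the membership $x\in T_{A,B}(x)$ via \eqref{eq:DR iteration} to obtain $a\in P_A(x)$ and $b\in P_B(2a-x)$ with $x=x+b-a$, deduce $a=b$, and conclude $a\in A\cap B$ from the ranges of the projectors. No gaps.
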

\begin{proof}
  Indeed, if $x\in T_{A,B}x$ then \eqref{eq:DR iteration} shows that there exist $a\in P_A(x)$ and $b\in P_B(2a-x)$ such that $x=x+b-a$. From the definition of the projectors onto $A$ and $B$, it follows that $a=b\in P_A(x)\cap P_B(2a-x)\subseteq A\cap B$ which proves the claim.
\end{proof}
 Consequently, if under appropriate condition, the DRA can be shown to converge to a fixed point, then it can be used to solve the feasibility problem. Unfortunately, for general combinatorial problems, there is no known unified framework which can be used to guarantee its convergence. Nevertheless, it is instructive to state the standard convergence results in the convex setting. Note that, in this case, both $A$ and $B$ have everywhere single-valued projectors, so the inclusions in \eqref{eq:DR iteration} can be replaced with equality.
\begin{fact}[Basic behavior of the Douglas--Rachford algorithm {\cite[Theorem~3.13]{BauComLuke}}]\label{fact:DRA}
	Let $A$ and $B$ be nonempty closed convex subsets of a finite dimensional Hilbert space $\mathcal{H}$. Let $x_0\in \mathcal{H}$ and define the sequence $(x_n)_{n=0}^\infty$ according to $x_{n+1}=T_{A,B}(x_n)$ for all $n\in\mathbb{N}$. Then either
	\begin{enumerate}[(i)]
		\item $A\cap B\neq\emptyset$ and $x_n\to x\in\Fix T$ with $P_A(x)\in A\cap B$, or
		\item $A\cap B=\emptyset$ and $\|x_n\|\to+\infty$.
	\end{enumerate}
\end{fact}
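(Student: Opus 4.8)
The plan is to establish two structural facts about the Douglas--Rachford operator $T := T_{A,B}$ and then to invoke finite-dimensional compactness. First, since $A$ and $B$ are closed and convex, $P_A$ and $P_B$ are single-valued and firmly nonexpansive, so the reflectors $R_A = 2P_A - I$ and $R_B = 2P_B - I$ are nonexpansive; hence $R_B R_A$ is nonexpansive and $T = \tfrac12(I + R_B R_A)$ is firmly nonexpansive (equivalently, $2T - I = R_B R_A$ is nonexpansive). Second, concerning fixed points: the Fact preceding this statement already yields that $x \in \Fix T$ implies $P_A x \in A \cap B$, while conversely any $a \in A \cap B$ satisfies $P_A a = a$, $R_A a = a$ and $P_B R_A a = a$, so $Ta = a$. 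Therefore $\Fix T \neq \emptyset \iff A \cap B \neq \emptyset$, and whenever $x \in \Fix T$ the point $P_A x$ solves the feasibility problem.

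For case (i), suppose $A \cap B \neq \emptyset$ and fix $p \in \Fix T$. Nonexpansiveness of $T$ gives $\|x_{n+1} - p\| \le \|x_n - p\|$, so $(x_n)$ is bounded (indeed Fej\'er monotone with respect to $\Fix T$); firm nonexpansiveness sharpens this to $\|x_{n+1} - p\|^2 + \|x_n - x_{n+1}\|^2 \le \|x_n - p\|^2$, and telescoping gives $\sum_n \|x_n - x_{n+1}\|^2 < \infty$, hence $x_n - x_{n+1} \to 0$. Since $\mathcal H$ is finite dimensional, $(x_n)$ has a cluster point $\bar x$; passing to the limit along a subsequence in $x_{n+1} = Tx_n$, using continuity of $T$ and $x_n - x_{n+1} \to 0$, yields $\bar x = T\bar x$. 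As $(\|x_n - \bar x\|)_n$ is then non-increasing and has a subsequence tending to $0$, the whole sequence converges to $\bar x \in \Fix T$, and $P_A\bar x \in A\cap B$ by the fixed-point characterization.

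For case (ii), suppose $A \cap B = \emptyset$, so $\Fix T = \emptyset$. I would use the classical fact that, for a firmly nonexpansive operator on a finite-dimensional space, the increments $x_n - x_{n+1}$ converge to some vector $v$ (the minimal-norm element of $\overline{\operatorname{ran}(I-T)}$). If $v \neq 0$, then from the telescoping identity $x_0 - x_n = \sum_{k=0}^{n-1}(x_k - x_{k+1})$ and the Ces\`aro mean theorem one gets $x_n/n \to -v \neq 0$, whence $\|x_n\| \to +\infty$. If $v = 0$, then $\|x_n - x_{n+1}\| \to 0$, and any bounded subsequence $x_{n_k} \to \bar x$ would give $\bar x = T\bar x$ by continuity of $T$, contradicting $\Fix T = \emptyset$; hence $(x_n)$ has no bounded subsequence, i.e.\ $\|x_n\| \to +\infty$.

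I expect the one genuinely non-elementary ingredient to be the convergence of the increments $x_n - x_{n+1}$ in case (ii), which rests on the asymptotic theory of firmly nonexpansive (equivalently, averaged) mappings rather than on any special structure of $T_{A,B}$; everything else is routine manipulation of the firm-nonexpansiveness inequality together with compactness of closed bounded subsets of $\mathbb{R}^n$. An equivalent route is to regard $x_{n+1} = Tx_n$ as the Krasnosel'skii--Mann iteration with parameter $\tfrac12$ for the nonexpansive operator $R_B R_A$ (whose fixed-point set coincides with $\Fix T$) and to quote the corresponding classical convergence/divergence dichotomy.
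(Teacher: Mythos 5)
The paper does not prove this statement: it is recorded as a \emph{Fact} with a citation to Bauschke--Combettes--Luke, so there is no internal proof to compare against. Your argument is correct and is essentially the standard proof of the cited theorem. The preliminary observations (firm nonexpansiveness of $T$, and $\Fix T\neq\emptyset\iff A\cap B\neq\emptyset$ via the paper's preceding Fact plus the easy converse $a\in A\cap B\Rightarrow Ta=a$) are fine, and case (i) is the usual Fej\'er-monotonicity/asymptotic-regularity argument, correctly closed off by using a cluster point in finite dimensions together with monotonicity of $\|x_n-\bar x\|$. Case (ii) is where the real content lies, and you have isolated it honestly: the strong convergence of the displacements $x_n-x_{n+1}$ to the minimal-norm element $v$ of $\overline{\operatorname{ran}}(I-T)$ is the Pazy/Baillon--Bruck--Reich theorem for averaged operators, which you quote rather than prove; if you wanted a self-contained write-up you would also need to record that $\overline{\operatorname{ran}}(I-T)$ is closed and convex so that $v$ is well defined. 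Your split into $v\neq 0$ (Ces\`aro argument forcing $\|x_n\|/n\to\|v\|>0$) and $v=0$ (no bounded subsequence, since one would produce a fixed point) is exactly right, and the $v=0$ subcase is not vacuous --- it occurs when $A\cap B=\emptyset$ but $\operatorname{dist}(A,B)=0$ --- so it is good that you treated it separately rather than assuming a positive gap.
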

While Fact~\ref{fact:DRA} clearly fails to hold when the sets $A$ and $B$ are combinatorial in nature (except in trivial cases), it nevertheless serves as a good template for the expected behavior of the algorithm in non-convex settings. In particular, we see that it is not the sequence $(x_n)_{n=1}^\infty$ itself which is of interest but rather its \emph{shadow}; the sequence $(P_A(x_n))_{n=1}^\infty$. Implementation of the method is discussed in Algorithm~\ref{alg:DR}. Since it is partly problem specific, we delay the discussion of the precise form of the stopping criteria used in Algorithm~\ref{alg:DR} until Section~\ref{sec:results}.

\begin{algorithm}[H]
	\caption{Implementation of the Douglas--Rachford algorithm.}\label{alg:DR}
	\KwIn{$x_0\in \mathcal{H}$}
	$n=0$\;
	$a_0 \in P_{A}(x_0)$\;
	\While{{\rm stopping criteria not satisfied}}{
		$b_{n} \in P_{B}(2a_n-x_n)$\;
		$x_{n+1} = x_n+b_n-a_{n}$\;
		$a_{n+1} \in P_{A}(x_{n+1})$\;
		$n = n+1$\;
	}
	\KwOut{$a_n\in \mathcal{H}$}
\end{algorithm}

\subsection{Correlation and complementary}
In this section we recall the definitions of the \emph{periodic correlation operator} and \emph{complementary sequences} before deriving a result which we use to formulate a necessary condition in the sequel. Before this, we start by recalling the \emph{Jacobi--Trudi identity}.

Consider a vector of $n$ variables denoted by $x=(x_1,x_2,\dots,x_{n})$. A polynomial is \emph{symmetric} if it is invariant under every permutation of its variables. The \emph{$k$-th elementary symmetric polynomial} of $x$, denoted $\sigma_k$, is defined by
\begin{align*}
\sigma_k (x):= \sum_{1\leq j_1<j_2<\dots<j_k\leq n}\left( \;\prod_{l=1}^kx_{j_l} \right),
\end{align*}
Every symmetric polynomial can be written uniquely as a polynomial in the elementary symmetric polynomials (see~\cite[Theorem~1.1.1]{paule2008algorithms}). The \emph{$k$-th power polynomial} of $x$, denoted $p_k$, is defined by
\begin{align}\label{eq:p_k}
p_k (x):= \sum_{j=1}^nx_j^k.
\end{align}
The relationship between the latter two objects is provided by the following identity.
\begin{fact}[Jacobi--Trudi identity {\cite[p.~7]{paule2008algorithms}}]\label{th:jacobi trudi}
	For each $k=1,2,\dots,n$, it holds that
	\begin{equation}
	\sigma_k = \frac{1}{k!}\det \begin{pmatrix}
	p_1 & 1   & 0 & \dots & 0 \\
	p_2 & p_1 & 2 & \dots & 0 \\
	\vdots & \vdots & \ddots & \ddots & \vdots \\
	p_{k-1} & p_{k-2} & \dots & p_1 & k-1 \\
	p_k & p_{k-1} & \dots & \dots & p_1 \\
	\end{pmatrix}.
	\end{equation}
\end{fact}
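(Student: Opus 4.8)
The plan is to recognise the claimed identity as the determinantal form of \emph{Newton's identities}, and to establish it in two stages. In the first stage I would record Newton's identities themselves. Writing $E(t):=\prod_{j=1}^{n}(1+x_jt)=\sum_{k\ge 0}\sigma_k\,t^k$, the logarithmic derivative gives $E'(t)/E(t)=\sum_{j=1}^{n}\frac{x_j}{1+x_jt}=\sum_{m\ge 1}(-1)^{m-1}p_m\,t^{m-1}$, hence $E'(t)=E(t)\sum_{m\ge 1}(-1)^{m-1}p_m\,t^{m-1}$. Comparing the coefficient of $t^{k-1}$ on both sides yields, for every $k\ge 1$,
\[
k\,\sigma_k=\sum_{i=1}^{k}(-1)^{i-1}p_i\,\sigma_{k-i}.
\]
Everything here is a polynomial identity in $x_1,\dots,x_n$, valid in particular for $1\le k\le n$; this is the only non-combinatorial input, and the remainder of the argument is linear algebra.

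In the second stage I would prove the determinant formula by strong induction on $k$. Denote the displayed $k\times k$ matrix by $M_k$; the base case $k=1$ is just $\sigma_1=p_1$. For the inductive step, expand $\det M_k$ along its last row, whose entry in column $j$ equals $p_{k+1-j}$. Deleting row $k$ and column $j$ leaves a block upper-triangular matrix: its leading $(j-1)\times(j-1)$ block is exactly $M_{j-1}$; the block of rows $1,\dots,j-1$ and columns $j+1,\dots,k$ vanishes, since any entry $(M_k)_{a,b}$ with $a\le j-1$ and $b\ge j+1$ has $b>a+1$ and is therefore $0$; and the trailing $(k-j)\times(k-j)$ block is triangular with diagonal entries $j,j+1,\dots,k-1$ (the superdiagonal entries of $M_k$). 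Using the inductive hypothesis $\det M_{j-1}=(j-1)!\,\sigma_{j-1}$, the resulting minor has determinant $(j-1)!\,\sigma_{j-1}\cdot\frac{(k-1)!}{(j-1)!}=(k-1)!\,\sigma_{j-1}$. Collecting the cofactor signs and substituting $i=k+1-j$,
\[
\det M_k=(k-1)!\sum_{j=1}^{k}(-1)^{k+j}p_{k+1-j}\,\sigma_{j-1}=(k-1)!\sum_{i=1}^{k}(-1)^{i-1}p_i\,\sigma_{k-i},
\]
and by the Newton identity above the last sum is $k\,\sigma_k$, so $\det M_k=k!\,\sigma_k$, which closes the induction.

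The main obstacle is entirely one of bookkeeping: one must verify carefully the block-triangular shape of each minor --- in particular that the rows above the pivot row genuinely vanish in the columns past the deleted one, and that the trailing block contributes precisely the factors $j,\dots,k-1$ --- and one must track the sign $(-1)^{k+j}$ through the re-indexing $i=k+1-j$ so that the parity matches the alternating signs in Newton's identity (here $(-1)^{k+j}=(-1)^{1-i}=(-1)^{i-1}$). An alternative that avoids the induction is to treat Newton's identities for $m=1,\dots,k$ as a linear system in the unknowns $\sigma_1,\dots,\sigma_k$ whose coefficient matrix is lower triangular with determinant $\pm k!$, and to apply Cramer's rule; this reproduces the stated determinant after a transposition and a row/column sign normalisation, but the sign accounting is of comparable difficulty, so I would favour the direct induction.
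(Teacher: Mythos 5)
Your argument is correct. Note first that the paper does not prove this statement at all: it is recorded as a \emph{Fact} with a citation to Sturmfels, so there is no in-paper proof to compare against. What you supply is a complete, self-contained derivation along the standard route: Newton's identities $k\,\sigma_k=\sum_{i=1}^{k}(-1)^{i-1}p_i\,\sigma_{k-i}$ obtained from the logarithmic derivative of $E(t)=\prod_j(1+x_jt)$, followed by strong induction via cofactor expansion of $\det M_k$ along the last row. I checked the bookkeeping: for $a\le j-1$ and $b\ge j+1$ one indeed has $b>a+1$ so the upper-right block of the minor vanishes, the trailing block has the superdiagonal entries $j,\dots,k-1$ on its diagonal with zeros above, and the sign $(-1)^{k+j}=(-1)^{i-1}$ under $i=k+1-j$ matches the alternating signs in Newton's identity, giving $\det M_k=(k-1)!\cdot k\,\sigma_k=k!\,\sigma_k$. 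One trivial slip: since it is the block of rows $1,\dots,j-1$ and columns $j+1,\dots,k$ (the upper-right block) that vanishes, the minor is block \emph{lower} triangular rather than upper triangular, and its trailing block is lower triangular; this does not affect the factorization of the determinant or any subsequent step. The only range caveat worth stating explicitly is that the Newton identity in the form you use is valid for $1\le k\le n$, which is exactly the range the Fact asserts.
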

The most important case of this identity for our purposes arises when $k=2$, in which case it yields
$$2\sigma_2=\det \begin{pmatrix}
p_1 & 1   \\
p_2 & p_1 \\
\end{pmatrix} = p_1^2-p_2.$$

 Let $\star\colon\mathbb{R}^n\times\mathbb{R}^n\to\mathbb{R}^n$ denote the \emph{periodic correlation operator} whose $s$-th entry is defined according to
 \begin{equation}\label{eq:periodic correlation}
 \left(a\star b\right)_s = \sum_{l=0}^{n-1}a_lb_{l+s}, \quad s=0,1,\ldots,n-1;
 \end{equation}
 where $a=(a_0,a_1,\dots,a_{n-1})\in\mathbb{R}^n$ and $b=(b_0,b_1,\dots,b_{n-1})\in\mathbb{R}^n$ are $n$-dimensional real vectors, and the indices in \eqref{eq:periodic correlation} understood modulo $n$.

 \begin{definition}[(Real) complementary sequences]
 	Consider vectors $a^0,a^1,\dots,a^{m-1}\in\mathbb{R}^n$. We say that the collection of sequences $\{a^j\}_{j=0}^{m-1}$ is \emph{(real) complementary} if there exist some constants $\nu_0$ and $\nu_1$ such that
 	   \begin{equation}\label{eq:complementary}
 	     \sum_{j=0}^{m-1}a^j\star a^j=(\nu_0,\nu_1,\dots,\nu_1).
 	   \end{equation}
 \end{definition}
 We note that the previous definition appears in \cite[Definition~2]{dokovic2015compression} for sequences which are potentially complex-valued.
 Using the Jacobi--Trudi identity, we are able to deduce the following necessary condition for complementary sequences which shall be used in the next section.
 \begin{proposition}[A necessary condition for complementary sequences]\label{prop:necessary condition for complenetary sequences}
   Suppose that the collection of sequences $\{a^j\}_{j=0}^{m-1}\subset\mathbb{R}^n$ is complementary with
       	   \begin{equation*}
       	   \sum_{j=0}^{m-1}a^j\star a^j=(\nu_0,\nu_1,\dots,\nu_1),
       	   \end{equation*}
   for constants $\nu_0$ and $\nu_1$. Then $\{p_1(a^j)\}_{j=0}^{m-1}\subset\mathbb{R}$ satisfy the equation
   $$\sum_{j=0}^{m-1}p_1^2(a^j)=\nu_1(n-1)+\nu_0,$$
   where $p_1$ is given by~\eqref{eq:p_k}.
 \end{proposition}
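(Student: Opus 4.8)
The plan is to read the complementary relation \eqref{eq:complementary} coordinate by coordinate, to identify the relevant partial sums of the correlation coefficients of a single sequence with its power and elementary symmetric polynomials, and then to invoke the case $k=2$ of the Jacobi--Trudi identity (Fact~\ref{th:jacobi trudi}), that is, $2\sigma_2=p_1^2-p_2$.

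First I would establish two elementary identities valid for an arbitrary $a=(a_0,a_1,\dots,a_{n-1})\in\mathbb{R}^n$. Setting $s=0$ in \eqref{eq:periodic correlation} gives
$$(a\star a)_0=\sum_{l=0}^{n-1}a_l^2=p_2(a).$$
Next I would compute the sum of the remaining (``off-diagonal'') coefficients,
$$\sum_{s=1}^{n-1}(a\star a)_s=\sum_{s=1}^{n-1}\sum_{l=0}^{n-1}a_la_{l+s},$$
with indices taken modulo $n$. Here I would observe that, for each fixed $l$, the index $l+s$ runs over all residues distinct from $l$ as $s$ runs from $1$ to $n-1$; hence the double sum equals $\sum_{l\neq k}a_la_k=2\sigma_2(a)$. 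This combinatorial bookkeeping is the only step that needs a modicum of care, so I expect it to be the (minor) crux of the argument. Adding the two identities and applying $2\sigma_2=p_1^2-p_2$ from Fact~\ref{th:jacobi trudi} yields
$$\sum_{s=0}^{n-1}(a\star a)_s=(a\star a)_0+\sum_{s=1}^{n-1}(a\star a)_s=p_2(a)+2\sigma_2(a)=p_1^2(a).$$

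Finally I would sum over all $n$ coordinates of the vector equation \eqref{eq:complementary}: the right-hand side contributes $\nu_0+(n-1)\nu_1$, while the left-hand side becomes $\sum_{j=0}^{m-1}\sum_{s=0}^{n-1}(a^j\star a^j)_s$, which by the last displayed identity equals $\sum_{j=0}^{m-1}p_1^2(a^j)$. Equating the two sides gives the asserted equation, completing the proof.
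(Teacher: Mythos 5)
Your proposal is correct and follows essentially the same route as the paper: both identify $\sum_{s=1}^{n-1}(a\star a)_s$ with $2\sigma_2(a)$ and invoke the $k=2$ case of the Jacobi--Trudi identity, then sum over the coordinates of \eqref{eq:complementary}. If anything, you supply the combinatorial bookkeeping (that $l+s$ sweeps out all residues distinct from $l$) and the identity $(a\star a)_0=p_2(a)$ explicitly, both of which the paper leaves implicit.
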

 \begin{proof}
  Applying the Jacobi--Trudi identity (\Cref{th:jacobi trudi}), we deduce that
  \begin{equation*}
    \sum_{s=1}^{n-1}(a^j\star a^j)_s  = 2\sigma_2(a^j) = \det \begin{pmatrix}
    p_1(a^j) & 1   \\
    p_2(a^j) & p_1(a^j) \end{pmatrix}
    = p_1^2(a^j)-p_2(a^j),
  \end{equation*}
  for all $j\in\{0,1,\dots,m-1\}$. Consequently,
  \begin{align*}
  \nu_1(n-1)  &= \sum_{s=1}^{n-1}\sum_{j=0}^{m-1}(a^j\star a^j)_s
 	  = \sum_{j=0}^{m-1}\sum_{s=1}^{n-1}(a^j\star a^j)_s \\
 	  &= \sum_{j=0}^{m-1}p_1^2(a^j)-\sum_{j=0}^{m-1}p_2(a^j)
 	  = \sum_{j=0}^{m-1}p_1^2(a^j)-\nu_0.
 	\end{align*}
  The claimed result follows by a routine rearrangement, thus completing the proof.
 \end{proof}

\section{Modelling Framework}\label{sec:model}
 In this section we explain how to model a general combinatorial design of circulant type as a three-set feasibility problem. More precisely, we consider designs belonging to the following class.
 \begin{definition}[Design of circulant type]\label{def:general design}
 	Consider natural numbers $n,m\in\mathbb{N}$, vectors $\alpha\in\mathbb{R}^m$ and $v\in\mathbb{R}^n$, and let $\mathcal{A}\subset\mathbb{R}$ be finite and nonempty. A \emph{design of circulant type} of order $n$ with parameters $(m,\alpha,v,\mathcal{A})$ is an $m$-tuple of vectors,
 	 $$(a^0,a^1,\dots,a^{m-1})\in (\mathcal{A}^n)^m:=\mathcal{A}^n\times \stackrel{(m)}{\cdots}\times \mathcal{A}^n,$$
 	which satisfy the following two conditions:
    $$\sum_{s=0}^{n-1}a_s^j=\alpha_j\quad\forall j\in\{0,1,\dots,m-1\},\text{~~and~~} \sum_{j=0}^{m-1}a^j\star a^j=v.$$
 \end{definition}	
 We remark that the notation ``$\mathcal{A}$" will be reserved for a finite subset of $\mathbb{R}$ which we refer to as the \emph{alphabet}. In this work, we will be concerned with the alphabets $\{\pm1\}$ and $\{0,\pm 1\}$.

\begin{formulation}\label{form:main}
Let $\mathcal{A}\subset\mathbb{R}$ be finite and nonempty, and let $\alpha\in\mathbb{R}^m$ and $v\in\mathbb{R}^n$. Consider the feasibility problem
\begin{equation}\label{eq:formulation main equation}
   \text{find }(a^0,a^1,\dots,a^{m-1})\in C_1\cap C_2\cap C_3\subseteq (\mathbb{R}^n)^m,
\end{equation}
where the constraint sets are defined by
 \begin{subequations}\label{eq:vector formulation constraints}
 	\begin{align}
 	C_1  &:= \left\{(a^0,a^1,\dots,a^{m-1})\in (\mathbb{R}^n)^m: a^j\in\mathcal{A}^n,\,\forall j=0,1,\dots,m-1\right\}, \label{eq:C1} \\
 	C_2  &:= \left\{(a^0,a^1,\dots,a^{m-1})\in (\mathbb{R}^n)^m :\sum_{s=0}^{n-1}a_s^j=\alpha_j,\forall j=0,1,\dots,m-1\right\}, \label{eq:C2} \\
 	C_3  &:= \left\{(a^0,a^1,\dots,a^{m-1})\in (\mathbb{R}^n)^m:\sum_{j=0}^{m-1}a^j\star a^j=v\right\}. \label{eq:C3}
 	\end{align}
 \end{subequations}
\end{formulation}

\begin{remark}[Autocorrelation constraints in bit retrieval]
	In the special case that $m=1$, the constraint $C_3$ appears in the formulation of the \emph{bit retrieval} problem used in \cite{elser2007searching}.
\end{remark}

\begin{remark}[Variants of $C_1$]
  Within our framework, the constraint set $C_1$ in \eqref{eq:C1} can be easily modified so that the alphabet $\mathcal{A}$ set is different for each vector $a^j$ or even for each individual entries of the vectors $a^j$. In this way, desired entries of a design can be fixed or avoided by choosing the corresponding alphabet sets to be singleton or to exclude certain values, respectively.
\end{remark}

For each set of parameters $(m,\alpha,v,\mathcal{A})$, it transpires that an $m$-tuple of vectors $(a^j)_{j=0}^{m-1}$ satisfies  Definition~\ref{def:general design} precisely when it is a feasible point for Formulation~\ref{form:main}. This equivalence is justified by the following proposition.
\begin{proposition}\label{prop:equivalence of formulations}
  Let $\mathcal{A}\subset\mathbb{R}$ be nonempty and finite. A collection of real complementary sequences $\{a^j\}_{j=0}^{m-1}\subseteq \mathcal{A}^n$ satisfies \eqref{eq:complementary} with constants $\nu_0$ and $\nu_1$ if and only if $(a^j)_{j=0}^{m-1}\in(\mathbb{R}^n)^m$ solves \eqref{eq:formulation main equation} in \Cref{form:main} with $v=(\nu_0,\nu_1,\dots,\nu_1)$ and some $\alpha\in\mathbb{R}^m$ which satisfies
    $$\sum_{j=0}^{m-1}\alpha_j^2=\nu_1(n-1)+\nu_0.$$
\end{proposition}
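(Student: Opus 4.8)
The plan is to prove the two implications separately; in both directions the bridge between the two descriptions is the observation that the $j$-th component sum $\sum_{s=0}^{n-1} a_s^j$ coincides with the first power polynomial $p_1(a^j)$ from \eqref{eq:p_k}. The only genuinely non-routine ingredient is \Cref{prop:necessary condition for complenetary sequences}, which supplies the arithmetic identity constraining $\alpha$; everything else is a direct comparison of \Cref{def:general design} and \eqref{eq:complementary} with the constraint sets $C_1$, $C_2$, $C_3$ of \Cref{form:main}.

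For the forward implication, I would start from a complementary collection $\{a^j\}_{j=0}^{m-1} \subseteq \mathcal{A}^n$ with $\sum_{j=0}^{m-1} a^j \star a^j = (\nu_0,\nu_1,\dots,\nu_1) =: v$, and set $\alpha_j := \sum_{s=0}^{n-1} a_s^j$. Then membership in $\mathcal{A}^n$ is exactly the condition defining $C_1$; the displayed identity is exactly the condition defining $C_3$ with this choice of $v$; and the definition of $\alpha$ places the tuple in $C_2$. Hence $(a^j)_{j=0}^{m-1}$ solves \eqref{eq:formulation main equation}. Finally, since $\alpha_j = p_1(a^j)$, \Cref{prop:necessary condition for complenetary sequences} yields $\sum_{j=0}^{m-1} \alpha_j^2 = \sum_{j=0}^{m-1} p_1^2(a^j) = \nu_1(n-1)+\nu_0$, which is the asserted constraint on $\alpha$.

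For the reverse implication, I would take a tuple $(a^j)_{j=0}^{m-1}$ solving \eqref{eq:formulation main equation} with $v=(\nu_0,\nu_1,\dots,\nu_1)$ for some admissible $\alpha$, and simply read off the conclusion: membership in $C_1$ gives $a^j \in \mathcal{A}^n$ for all $j$, and membership in $C_3$ gives $\sum_{j=0}^{m-1} a^j \star a^j = v = (\nu_0,\nu_1,\dots,\nu_1)$, which is precisely \eqref{eq:complementary}. Thus $\{a^j\}_{j=0}^{m-1}$ is complementary with constants $\nu_0$, $\nu_1$. I do not expect a real obstacle here; the one point worth a remark is that the parameter $\alpha$ is not free, since $C_2$ together with $C_3$ already force $\sum_{j=0}^{m-1} \alpha_j^2 = \nu_1(n-1)+\nu_0$ through \Cref{prop:necessary condition for complenetary sequences} — and this is exactly why the statement pins down which value of $\alpha$ must be used when the feasibility model is instantiated for a prescribed design.
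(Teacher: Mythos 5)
Your proof is correct and follows the same route as the paper, whose entire proof is the single line ``This is an immediate consequence of Proposition~\ref{prop:necessary condition for complenetary sequences}''; you have simply made explicit the identification $\alpha_j=p_1(a^j)=\sum_{s}a^j_s$ and the direct matching of the complementarity conditions with $C_1$, $C_2$, $C_3$ that the authors leave implicit.
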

\begin{proof}
	This is an immediate consequence of Proposition~\ref{prop:necessary condition for complenetary sequences}.
\end{proof}

In order for the feasibility problem defined by Formulation~\ref{form:main} to be computationally useful, it is necessary that the projectors onto the constraint sets in \eqref{eq:vector formulation constraints} can be efficiently computed. In what follows, we prove that this is indeed the case.

\begin{proposition}[Projector onto $C_1$]\label{prop:proj C1}
	Let $(a^0,a^1,\dots,a^{m-1})\in (\mathbb{R}^n)^m$. Then $P_{C_1}\left((a^{j})_{j=0}^{m-1}\right)$ is the set of points $(\bar{a}^{j})_{j=0}^{m-1}\in (\mathbb{R}^n)^m$ which satisfy, for all $j=0,1,\ldots,m-1$ and $s=0,1,\ldots,n-1$,
	\begin{equation}
	\bar{a}^j_s \in \left\{l\in\mathcal{A}:\left|l-a^j_s\right|=\min_{\bar{l}\in\mathcal{A}}\left|\bar{l}-a^j_s\right|\right\}.
	\end{equation}
\end{proposition}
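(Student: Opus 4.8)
The plan is to exploit the product-space structure: $C_1 = \mathcal{A}^n \times \cdots \times \mathcal{A}^n$ is itself a Cartesian product of $mn$ copies of the finite set $\mathcal{A} \subset \mathbb{R}$, so by the product-space projector formula (the Proposition stated in the excerpt, applied at the level of single coordinates rather than $\mathbb{R}^n$-blocks) the projector onto $C_1$ decomposes coordinatewise. Concretely, $P_{C_1}\big((a^j)_{j=0}^{m-1}\big) = \prod_{j=0}^{m-1}\prod_{s=0}^{n-1} P_{\mathcal{A}}(a^j_s)$, and it remains only to identify $P_{\mathcal{A}}(t)$ for a single scalar $t \in \mathbb{R}$ with the set $\{l \in \mathcal{A} : |l - t| = \min_{\bar l \in \mathcal{A}}|\bar l - t|\}$.

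First I would observe that the claimed formula for $P_{\mathcal{A}}(t)$ is essentially the definition of the projector unwound in dimension one. Recall $P_{\mathcal{A}}(t) = \{l \in \mathcal{A} : \|t - l\| \le \|t - l'\|\ \forall l' \in \mathcal{A}\}$; in $\mathbb{R}$ the norm is the absolute value, so this is exactly the set of minimizers of $\bar l \mapsto |\bar l - t|$ over $\mathcal{A}$. Since $\mathcal{A}$ is finite and nonempty, this minimum is attained, so $P_{\mathcal{A}}(t) \neq \emptyset$; there is nothing to prove here beyond rewriting $\min$ over the finite set in place of the universal quantifier over $l'$.

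Second, I would make the coordinatewise decomposition rigorous. One route is to apply the product-space projector Proposition directly with $n$ replaced by $1$ and $m$ replaced by $mn$, identifying $(\mathbb{R}^n)^m = \mathbb{R}^{mn}$ and $C_1$ with the product $\prod \mathcal{A}$; then $P_{C_1}$ is the Cartesian product of the $P_{\mathcal{A}}$'s, which is precisely the stated characterization. Alternatively — and perhaps cleaner for the write-up — I would argue from scratch: for any $\bar{a} = (\bar a^j)_{j}$ and any $x = (a^j)_j$, the squared distance splits as $\|x - \bar a\|^2 = \sum_{j,s}(a^j_s - \bar a^j_s)^2$, a sum of nonnegative terms over disjoint coordinates subject to the independent constraints $\bar a^j_s \in \mathcal{A}$; hence the sum is minimized iff each summand is, i.e.\ iff $\bar a^j_s \in P_{\mathcal{A}}(a^j_s)$ for every $j$ and $s$. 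Combined with the first step, this yields the claim.

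I do not anticipate a genuine obstacle: the result is a routine separability fact for projectors onto Cartesian products, and the only point requiring a word of care is noting that the minimum over the finite alphabet $\mathcal{A}$ is attained so that the projector is nonempty (in particular $C_1$ is closed, as required for $P_{C_1}$ to be meaningful). The mild subtlety worth flagging explicitly is that $P_{C_1}$ is genuinely set-valued whenever some $a^j_s$ is equidistant from two or more points of $\mathcal{A}$ — e.g.\ $a^j_s = 0$ with $\mathcal{A} = \{\pm 1\}$, or $a^j_s = \tfrac12$ with $\mathcal{A} = \{0,\pm1\}$ — which is exactly why the statement is phrased as an inclusion $\bar a^j_s \in \{\cdots\}$ rather than an equality.
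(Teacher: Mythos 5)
Your proposal is correct and follows essentially the same route as the paper: identify the scalar projector $P_{\mathcal{A}}$ as the set of minimizers of $\bar{l}\mapsto|\bar{l}-a^j_s|$ over the finite alphabet, then apply it coordinatewise using the separability of the squared norm on $(\mathbb{R}^n)^m$. Your added remarks on attainment of the minimum and on set-valuedness are sound but not needed beyond what the paper's one-line argument already contains.
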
	
\begin{proof}
  Let $a\in\mathbb{R}$. We observe that projector onto the set $\mathcal{A}$ is given by
  $$P_{\mathcal{A}}(a) = \left\{l\in\mathcal{A}:|l-a|=\min_{\bar{l}\in\mathcal{A}}\left|\bar{l}-a\right|\right\}.$$
  Applying this result pointwise and using definition of the inner-product on $(\mathbb{R}^n)^m$, the result follows.
\end{proof}

\begin{proposition}[Projector onto $C_2$]\label{prop:proj C2}
	Let $(a^0,a^1,\dots,a^{m-1})\in (\mathbb{R}^n)^m$ and $e=(1,1,\dots,1)\in\mathbb{R}^n$. Then
	$$ P_{C_2}\left((a^{j})_{j=0}^{m-1}\right) = \left(a^j+\frac{1}{n}\left(\alpha_j-\sum_{s=0}^{n-1}a^j_s\right)e\right)_{j=0}^{m-1}.$$
\end{proposition}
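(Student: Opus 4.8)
The plan is to exploit the fact that $C_2$ is a Cartesian product of affine hyperplanes, one per block, and that projecting onto a product set reduces to projecting blockwise. First I would observe that $C_2 = H_0\times H_1\times\dots\times H_{m-1}$, where $H_j:=\{x\in\mathbb{R}^n:\langle e,x\rangle=\alpha_j\}$. Since the inner product on $(\mathbb{R}^n)^m$ is the sum of the block inner products, the squared distance from $(a^j)_{j=0}^{m-1}$ to any $(\bar a^j)_{j=0}^{m-1}\in C_2$ splits as $\sum_{j=0}^{m-1}\|a^j-\bar a^j\|^2$, so it is minimized exactly when each summand is minimized individually; hence $P_{C_2}\big((a^j)_{j=0}^{m-1}\big)=\big(P_{H_j}(a^j)\big)_{j=0}^{m-1}$. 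This is precisely the argument used in the proof of the product-space projector proposition, specialized to the present blocks, so it can be invoked directly.

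Second, I would compute $P_{H_j}$. Each $H_j$ is a nonempty closed affine hyperplane (nonempty since $e\neq 0$; e.g.\ $\tfrac{\alpha_j}{n}e\in H_j$), hence convex with a single-valued projector. Set $\bar a^j:=a^j+\tfrac{1}{n}\big(\alpha_j-\sum_{s=0}^{n-1}a^j_s\big)e$. To verify that $\bar a^j$ is the projection, it suffices to check the two characterizing conditions for the projection onto a closed convex set: that $\bar a^j\in H_j$, and that the residual $a^j-\bar a^j$ is orthogonal to the linear subspace $e^{\perp}$ that is parallel to $H_j$. The first is a one-line computation using $\langle e,e\rangle=n$: $\langle e,\bar a^j\rangle=\sum_{s=0}^{n-1}a^j_s+\tfrac{1}{n}\big(\alpha_j-\sum_{s=0}^{n-1}a^j_s\big)n=\alpha_j$. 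The second holds because $a^j-\bar a^j=\tfrac{1}{n}\big(\sum_{s=0}^{n-1}a^j_s-\alpha_j\big)e$ is a scalar multiple of $e$, hence orthogonal to every vector of $e^{\perp}$. Alternatively, one can derive the formula from scratch by minimizing $\|x-a^j\|^2$ over $x\in H_j$ via a single Lagrange multiplier; both routes are routine.

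There is essentially no hard step here: the only points worth stating carefully are that $C_2$ is convex, so the projector is single-valued (consistent with the ``$=$'' in the statement rather than an inclusion), and that the displayed tuple indeed belongs to $C_2$. Combining the blockwise reduction with the hyperplane formula then yields the claimed expression, completing the proof.
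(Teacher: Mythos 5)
Your proposal is correct and follows essentially the same route as the paper: decompose $C_2$ as a product of the hyperplanes $H_j$, reduce the projection blockwise via the product inner product, and apply the standard hyperplane projection formula (which the paper simply cites from the literature while you verify it directly via feasibility and orthogonality of the residual). No issues.
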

\begin{proof}
	The projection of any point $a\in\mathbb{R}^n$ onto the hyperplane $H_j:=\left\{a\in\mathbb{R}^n:e^T a = \alpha_j\right\}$ is given by (see, for instance, \cite[Example~3.21]{BauCom})
	$$ P_{H_j}(a)= a+\frac{1}{\|e\|^2}\left(\alpha_j-e^T a\right)e = a+\frac{1}{n}\left(\alpha_j-\sum_{s=0}^{n-1}a_s\right)e. $$
	The definition of the inner-product on $(\mathbb{R}^n)^m$ yields $P_{C_2}\left((a^{j})_{j=0}^{m-1}\right)=\left(P_{H_j}(a^{j})\right)_{j=0}^{m-1}$, from which the result follows.
\end{proof}

Thus, implementation of the projectors given in Proposition~\ref{prop:proj C1}~\&~\ref{prop:proj C2}, requires only vector arithmetic and finding the minimum of a finite set. From a computation perspective, the latter poses no problem when the alphabet, $\mathcal{A}$, is small. We now turn our attention to describing the projector onto $C_3$.

Let $\mathcal{F}:\mathbb{C}^n\to\mathbb{C}^n$ denote the  \emph{(unitary) discrete Fourier transform (DFT)}, that is, the linear mapping defined for any $a\in\mathbb{C}^n$ by
$$\mathcal{F}(a):=\frac{1}{\sqrt{n}}\begin{pmatrix}
\,1 & 1 & \cdots& 1\\
\,1 & \omega^{1\cdot 1}&\cdots &\omega^{1\cdot(n-1)}\\
\,\vdots &\vdots&\ddots&\vdots\\
\,1 & \omega^{(n-1)\cdot 1}&\cdots&\omega^{(n-1)\cdot(n-1)}
\end{pmatrix}a,$$
where $\omega:=e^{2\pi i/n}$ is a primitive $n$-th root of unity. Let $\mathcal{F}^{-1}$ denote its inverse.
In the following facts, both $|\cdot|$ and $(\cdot)^2$ are understood in the pointwise sense, and $(\cdot)^\ast$ denotes the (complex) conjugate of a complex number.

 \begin{fact}[Properties of the DFT]\label{fact:DFT properties}
 	Let $a=(a_0,a_1,\ldots,a_{n-1})\in\mathbb{C}^{n}$.
 	\begin{enumerate}[(i)]
 		\item\label{item:real DFT} (Conjugate symmetry) $a\in\mathbb{R}^n$ if and only if
 		$\mathcal{F}(a)$ is \emph{conjugate symmetric}, that is,
 		$$\mathcal{F}(a)_0\in\mathbb{R} \text{~~and~~} \mathcal{F}(a)_s=\left(\mathcal{F}(a)_{n-s}\right)^*, \forall s=1,2,\ldots,n-1. $$
 		\item\label{item:DFT convulation} (Correlation theorem) $\mathcal{F}(a\star a)=|\mathcal{F}(a)|^2.$
 		\item\label{item:DFT linear isometry} $\mathcal{F}$ is a linear isometry on $\mathbb{C}^n$.
 	\end{enumerate}
 \end{fact}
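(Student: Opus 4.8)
The plan is to prove the three standard properties of the unitary DFT: conjugate symmetry characterizing real vectors, the correlation theorem, and that $\mathcal{F}$ is a linear isometry. Each follows from elementary properties of roots of unity, so the main work is bookkeeping with indices modulo $n$.

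For item~\eqref{item:DFT linear isometry}, I would argue that $\mathcal{F}$ is linear by construction (it is given by a matrix), and unitary because the matrix $\frac{1}{\sqrt n}(\omega^{jk})_{j,k}$ has orthonormal columns: the inner product of column $j$ and column $k$ is $\frac{1}{n}\sum_{l=0}^{n-1}\omega^{lj}(\omega^{lk})^* = \frac{1}{n}\sum_{l=0}^{n-1}\omega^{l(j-k)}$, which equals $1$ if $j=k$ and $0$ otherwise, by the geometric-series identity $\sum_{l=0}^{n-1}z^l = 0$ for $z$ an $n$-th root of unity with $z\neq 1$. Hence $\mathcal{F}^*\mathcal{F}=I$, so $\|\mathcal{F}(a)\|=\|a\|$ for all $a\in\mathbb{C}^n$, and this also identifies $\mathcal{F}^{-1}=\mathcal{F}^*$. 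For item~\eqref{item:real DFT}, I would compute $(\mathcal{F}(a)_s)^* = \frac{1}{\sqrt n}\sum_{l}a_l^*(\omega^{sl})^* = \frac{1}{\sqrt n}\sum_{l}a_l^*\omega^{-sl}$; when $a$ is real this is $\frac{1}{\sqrt n}\sum_l a_l\omega^{(n-s)l}=\mathcal{F}(a)_{n-s}$ (indices mod $n$), and at $s=0$ one gets $\mathcal{F}(a)_0=\frac{1}{\sqrt n}\sum_l a_l\in\mathbb{R}$, giving the forward implication; conversely, if $\mathcal{F}(a)$ is conjugate symmetric, applying $\mathcal{F}^{-1}$ (whose matrix is the conjugate of $\mathcal{F}$'s up to the $1/\sqrt n$ normalization) shows $a=a^*$, hence $a\in\mathbb{R}^n$.

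For item~\eqref{item:DFT convulation}, the correlation theorem, I would expand directly:
\begin{equation*}
\mathcal{F}(a\star a)_s = \frac{1}{\sqrt n}\sum_{t=0}^{n-1}(a\star a)_t\,\omega^{st}
 = \frac{1}{\sqrt n}\sum_{t=0}^{n-1}\sum_{l=0}^{n-1}a_l\,a_{l+t}\,\omega^{st}.
\end{equation*}
Substituting $k=l+t$ (mod $n$), so $t=k-l$, and using periodicity of $\omega^{s\cdot}$, the double sum factors as
\begin{equation*}
\frac{1}{\sqrt n}\sum_{l=0}^{n-1}\sum_{k=0}^{n-1}a_l\,a_k\,\omega^{s(k-l)}
 = \frac{1}{\sqrt n}\left(\sum_{k=0}^{n-1}a_k\,\omega^{sk}\right)\left(\sum_{l=0}^{n-1}a_l\,\omega^{-sl}\right)
 = \sqrt n\,\mathcal{F}(a)_s\,\big(\mathcal{F}(a)_s\big)^*,
\end{equation*}
where the last equality uses that $a$ is real so that $\big(\mathcal{F}(a)_s\big)^* = \frac{1}{\sqrt n}\sum_l a_l\omega^{-sl}$. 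Hence $\mathcal{F}(a\star a)_s = \sqrt n\,|\mathcal{F}(a)_s|^2$. (A normalization constant of $\sqrt n$ appears here because $\mathcal{F}$ is the \emph{unitary} DFT; if the statement is meant without that constant, one simply absorbs it, but I would double-check the intended normalization against how the fact is used later.)

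I do not expect any serious obstacle: the only thing to be careful about is the index arithmetic modulo $n$ in the substitution $k=l+t$ and the consistent treatment of the $1/\sqrt n$ normalization across the three parts, since the unitary convention forces an explicit $\sqrt n$ into the correlation identity that is absent in the more common unnormalized convention.
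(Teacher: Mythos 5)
Your proposal is correct, but it takes a different route from the paper: the paper does not prove this fact at all, it simply cites Briggs--Henson \cite{BH95} for items (i) and (ii) and observes that (iii) follows from unitarity of the DFT matrix. Your direct computations (orthonormality of the columns via the geometric-series identity for roots of unity, the conjugate-symmetry calculation, and the index substitution $k=l+t$ in the double sum) are the standard arguments behind those citations, so what your approach buys is a self-contained verification. More importantly, your computation surfaces a real issue that the citation-only proof glosses over: with the paper's \emph{unitary} normalization $\mathcal{F}(a)_s=\frac{1}{\sqrt n}\sum_l\omega^{sl}a_l$, the correlation theorem reads $\mathcal{F}(a\star a)=\sqrt n\,|\mathcal{F}(a)|^2$, not $\mathcal{F}(a\star a)=|\mathcal{F}(a)|^2$ as stated; the clean form without the $\sqrt n$ holds only for the unnormalized DFT. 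This is a harmless constant downstream (it rescales $\hat v$ in the definition of $\widehat{C_3}$ but does not change the structure of the projector), yet you are right to flag it. You also correctly note that the identity $\mathcal{F}(a\star a)_s=\sqrt n\,\mathcal{F}(a)_s\bigl(\mathcal{F}(a)_s\bigr)^*$ uses $a\in\mathbb{R}^n$ at the last step, whereas the Fact is stated for $a\in\mathbb{C}^n$; for genuinely complex $a$ one would need a conjugate in the definition of $\star$, though this does not matter for the paper's applications, which are all to real sequences.
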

 \begin{proof}
 	\eqref{item:real DFT}:~See, e.g.,~\cite[pp.~76--77]{BH95}.
 	\eqref{item:DFT convulation}:~See, e.g.,~\cite[p.~83]{BH95}.
 	\eqref{item:DFT linear isometry}:~This follows from the fact that $\mathcal{F}$ is unitary.
 \end{proof}

In the following proposition, we denote the unit sphere in $\mathbb{C}^m$ by $$\mathbb{S}:=\left\{(z_j)_{j=0}^{m-1}\in\mathbb{C}^m:\sum_{j=0}^{m-1}|z_j|^2=1\right\},$$
 and we set
 $Y:=\mathcal{F}(\mathbb{R}^n)^m=\mathcal{F}(\mathbb{R}^n)\times\stackrel{(m)}{\dots}\times\mathcal{F}(\mathbb{R}^n)$
 where, due to Fact~\ref{fact:DFT properties}, the set $\mathcal{F}(\mathbb{R}^n)$ is precisely the set of conjugate symmetric vectors in $\mathbb{C}^n$, i.e.,
   $$\mathcal{F}(\mathbb{R}^n)=\left\{(z_s)_{s=0}^{n-1}\in\mathbb{C}^n: z_0\in\mathbb{R},\, z_s=z_{n-s}^*,\forall s=1,2,\ldots,n-1 \right\}.$$

\begin{proposition}[Projector onto $C_3$]
	Let $(\hat{a}^j)_{j=0}^{m-1}\in Y$, $v\in\mathbb{R}^n$ and $\hat{v}:=\mathcal{F}(v)$. Then
	\begin{equation}\label{eq:PC3_1}
	P_{C_3}=(\mathcal{F}^{-1},\dots,\mathcal{F}^{-1})\circ P_{\widehat{C_3}}\circ(\mathcal{F},\dots,\mathcal{F}),
	\end{equation}
	where the set $\widehat{C_3}$ is given by
	$$\widehat{C_3}:=\left\{(\hat{a}^j)_{j=0}^{m-1}\in Y: \sum_{j=0}^{m-1}|\hat{a}^j|^2=\hat{v}\right\}$$%
and $P_{\widehat{C_3}}\left((\hat{a}^j)_{j=0}^{m-1}\right)$ is given by  the set of all points $(\bar{a}^j)_{j=0}^{m-1}\in Y$ which satisfy, for all $s=0,1,\ldots,n-1$:
	\begin{equation}\label{eq:proj_C3}
	\begin{cases}
	(\bar{a}_s^j)_{j=0}^{m-1} = \frac{\sqrt{\hat{v}_s}}{\sqrt{\sum_{j=0}^{m-1}|{\hat{a}}_s^j|^2}}(\hat{a}_s^j)_{j=0}^{m-1}, & \text{if }(\hat{a}_s^j)_{j=0}^{m-1}\neq 0_m, \\
	(\bar{a}_s^j)_{j=0}^{m-1} \in  \sqrt{\hat{v}_s}\mathbb{S}, & \text{if }(\hat{a}_s^j)_{j=0}^{m-1}= 0_m. \\
	\end{cases}
	\end{equation}	
\end{proposition}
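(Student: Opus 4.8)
The plan is to reduce the computation of $P_{C_3}$ to an independent, coordinatewise problem on the Fourier side, and then solve that one-variable (in $s$) problem explicitly. First I would use the fact that $\mathcal{F}$ acts separately on each block of $(\mathbb{R}^n)^m$ so that $(\mathcal{F},\dots,\mathcal{F})$ is a linear isometry from $(\mathbb{R}^n)^m$ onto $Y=\mathcal{F}(\mathbb{R}^n)^m$ (Fact~\ref{fact:DFT properties}\eqref{item:DFT linear isometry}); since isometries preserve nearest points, $P_{C_3}=(\mathcal{F}^{-1},\dots,\mathcal{F}^{-1})\circ P_{\mathcal{F}(C_3)}\circ(\mathcal{F},\dots,\mathcal{F})$. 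Next I would identify $\mathcal{F}(C_3)$ with $\widehat{C_3}$: applying $\mathcal{F}$ to the defining relation $\sum_j a^j\star a^j=v$ and invoking the correlation theorem (Fact~\ref{fact:DFT properties}\eqref{item:DFT convulation}) gives $\sum_j|\mathcal{F}(a^j)|^2=\mathcal{F}(v)=\hat v$, and conversely any element of $Y$ satisfying this lies in $\mathcal{F}(C_3)$ because $\mathcal{F}$ is a bijection onto $Y$ and $\mathcal{F}^{-1}$ of a conjugate-symmetric vector is real. This establishes \eqref{eq:PC3_1}.

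It then remains to compute $P_{\widehat{C_3}}$. The key structural observation is that the constraint $\sum_{j=0}^{m-1}|\hat a^j_s|^2=\hat v_s$ couples the $m$ components only \emph{within} each fixed coordinate $s$, so minimizing $\sum_{j,s}|\bar a^j_s-\hat a^j_s|^2$ decomposes into $n$ independent problems, one per $s$: given the vector $w:=(\hat a^j_s)_{j=0}^{m-1}\in\mathbb{C}^m$, find the point of the sphere $\sqrt{\hat v_s}\,\mathbb{S}=\{z\in\mathbb{C}^m:\|z\|^2=\hat v_s\}$ nearest to $w$. (One must note $\hat v_s\ge 0$: this is forced for any feasible design since the left-hand side $\sum_j|\hat a^j_s|^2$ is a sum of squared moduli; if $\hat v_s<0$ the set is empty and the projector is empty, a degenerate case I would mention in passing.) For a sphere of radius $r=\sqrt{\hat v_s}$ centered at the origin, the nearest point to $w\ne 0$ is the unique point $r\,w/\|w\|$, while if $w=0$ every point of the sphere is equidistant, giving the whole set $r\,\mathbb{S}$. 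Assembling these coordinatewise answers over $s=0,\dots,n-1$ yields exactly \eqref{eq:proj_C3}. Finally I would verify the output lands in $Y$: since the input is conjugate symmetric in $s$ and $\hat v=\mathcal{F}(v)$ is conjugate symmetric (as $v\in\mathbb{R}^n$), the formula $\bar a^j_s=\sqrt{\hat v_s}\,w/\|w\|$ respects the symmetry $\bar a^j_s=(\bar a^j_{n-s})^\ast$ up to the usual care at $s=0$, so the reconstructed vectors are real after $\mathcal{F}^{-1}$.

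The main obstacle I anticipate is bookkeeping the interaction between the conjugate-symmetry constraint defining $Y$ and the coordinatewise sphere projection: one must check that performing the unconstrained (within-$s$) sphere projection automatically preserves membership in $\mathcal{F}(\mathbb{R}^n)^m$, i.e.\ that the symmetry constraint is not active and does not alter the projection. This works because conjugate symmetry links coordinate $s$ with coordinate $n-s$ in a way that is consistent with scaling both by the same nonnegative factor $\sqrt{\hat v_s}=\sqrt{\hat v_{n-s}}$, and because at the self-paired index $s=0$ (and $s=n/2$ when $n$ is even) the relevant entries are real and $\hat v_0\ge 0$, so the real structure is preserved; in the degenerate branch $w=0$ one can always select a conjugate-symmetric representative on the sphere, so $P_{\widehat{C_3}}$ is genuinely nonempty whenever $\widehat{C_3}$ is. Apart from this, every remaining step is a routine application of the stated facts about the DFT and the elementary geometry of projection onto a centered sphere.
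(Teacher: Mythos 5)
Your proposal is correct and follows essentially the same route as the paper's proof: conjugation by the isometry $(\mathcal{F},\dots,\mathcal{F})$ combined with the correlation theorem to identify $\mathcal{F}(C_3)$ with $\widehat{C_3}$, and a coordinatewise reduction of $P_{\widehat{C_3}}$ to projections onto centered spheres in $\mathbb{C}^m$. The only differences are cosmetic (you treat the two halves in the opposite order) plus two welcome extra observations the paper leaves implicit, namely that $\hat v_s\ge 0$ is needed for $\widehat{C_3}\neq\emptyset$ and that the sphere projections are compatible with the conjugate-symmetry constraint defining $Y$.
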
	
\begin{proof}
	We first prove the claimed formula for $P_{\widehat{C_3}}$. To this end, note that
	\begin{equation}\label{eq:decom_c3}
	\widehat{C_3} = E\cap Y\text{ where } E:= \left\{(\hat{a}^j)_{j=0}^{m-1}\in (\mathbb{C}^n)^m: \sum_{j=0}^{m-1}|\hat{a}^j|^2=\hat{v}\right\}.
	\end{equation}
	As the projector onto $\mathbb{S}$ for a point $z\in\mathbb{C}^m$ is given by
	\begin{equation}\label{eq:proj_S}
	P_{\mathbb{S}}(z) = \begin{cases}
	z/\|z\|, & \text{if }z\neq 0_m, \\
	\mathbb{S}, & \text{if }z=0_m, \\
	\end{cases}
	\end{equation}
	applying \eqref{eq:proj_S} to each $m$-tuple $(\hat{a}_s^j)_{j=0}^{m-1}$, we  deduce that $(\bar{a}^j)_{j=0}^{m-1}\in P_{E}\left((\hat{a}^j)_{j=0}^{m-1}\right)\subset(\mathbb{C}^n)^m$ precisely when the vector ${(\bar{a}_s^j)_{j=0}^{m-1}\in\mathbb{C}^m}$ satisfies~\eqref{eq:proj_C3}
for all $s=0,\ldots,n-1$.	Due to \eqref{eq:decom_c3}, any vector $(\bar{a}^j)_{j=0}^{m-1}$ which satisfies \eqref{eq:proj_C3}
and is contained in $Y$ is an element of $P_{\widehat{C_3}}\left((\hat{a}^j)_{j=0}^{m-1}\right)$. Thus the claimed formula for $P_{\widehat{C_3}}$ follows.
	
	Next we prove \eqref{eq:PC3_1}. We first note that since the Fourier transform, $\mathcal{F}$, is a linear isometry on $\mathbb{C}^n$ (Fact~\ref{fact:DFT properties}\eqref{item:DFT linear isometry}), the operator $(\mathcal{F},\dots,\mathcal{F})$ is a linear isometry on $(\mathbb{C}^n)^m$ with inverse given by $(\mathcal{F},\dots,\mathcal{F})^{-1}=(\mathcal{F}^{-1},\dots,\mathcal{F}^{-1})$. Thanks to \cite[Lemma~3.21]{HesseThesis}, we therefore have that
	\begin{equation}\label{eq:P_C3 complex}
	P_{C_3}=(\mathcal{F}^{-1},\dots,\mathcal{F}^{-1})\circ P_{\mathcal{F}(C_3)}\circ(\mathcal{F},\dots,\mathcal{F}),
	\end{equation}
	where $\mathcal{F}(C_3):=\left\{\left(\mathcal{F}(a^j)\right)_{j=0}^{m-1}:(a^j)_{j=0}^{m-1}\in C_3\right\}$. To complete the proof, it therefore suffices to show $\mathcal{F}(C_3)=\widehat{C_3}$.  To this end, we observe that for a tuple $(a^j)_{j=0}^{m-1}\in(\mathbb{C}^n)^m$, we have
	$$\sum_{j=0}^{m-1}a^j\star a^j=v ~\stackrel{\text{Fact~\ref{fact:DFT properties}\eqref{item:DFT linear isometry}}}{\iff}~ \sum_{j=0}^{m-1}\mathcal{F}(a^j\star a^j)=\hat{v}
	~\stackrel{\text{Fact~\ref{fact:DFT properties}\eqref{item:DFT convulation}}}{\iff}~ \sum_{j=0}^{m-1}\left|\mathcal{F}(a^j)\right|^2=\hat{v},$$
	which shows that $\mathcal{F}(C_3)\subseteq \widehat{C_3}$. To deduce the reverse inclusion, note that $\mathcal{F}$ is invertible (Fact~\ref{fact:DFT properties}\eqref{item:DFT linear isometry}) and use the same argument with $(a^j)_{j=0}^{m-1}:=(\mathcal{F}^{-1}(\hat{a}^j))_{j=0}^{m-1}$.
\end{proof}	

\begin{remark}[Equation~\eqref{eq:proj_C3}]
	We emphasize that it is important to note that the projector onto $\widehat{C_3}$ is given by \eqref{eq:proj_C3} for tuples $(\bar{a}^j)_{j=0}^{m-1}$ contained in $Y$ but not $(\mathbb{C}^n)^m$.
\end{remark}

We now provide three concrete examples of types of combinatorial designs which can be described in terms of the structure proposed in \Cref{form:main}.

\subsection{Circulant weighing matrices}\label{ssec:CW matricces}
Recall that a matrix $W\in\mathbb{R}^{n\times n}$ is said to be \emph{circulant} if there is a vector $w\in\mathbb{R}^n$ such that the rows of $W$ are cyclic permutations of $w$ (offset by their row index).
 \begin{definition}[Circulant weighing matrix]\label{def:cw matrices}
 	Let $n,k\in\mathbb{N}$. A \emph{circulant weighing matrix} of order $n$ and weight $k$, denoted $\operatorname{CW}(n,k^2)$, is a circulant matrix $W\in\{0,\pm1\}^{n\times n}$ such that
 	  \begin{equation}\label{eq:cw equality}
 	    WW^T=k^2I,
 	  \end{equation}
 where $I\in\mathbb{R}^{n\times n}$ denotes the identity matrix.
 \end{definition}
Since the matrix $W$ is circulant, there exists a vector $a\in\{0,\pm 1\}^n$ such that $W=c(a)$ where the mapping $c:\mathbb{R}^n\to\mathbb{R}^{n\times n}$ maps a vector to an associated circulant matrix.  For such a vector, the equality \eqref{eq:cw equality} is equivalent to
  \begin{equation}\label{eq:autocorrelation CW}
    a\star a=(k^2,0,0,\dots,0).
  \end{equation}
Applying \Cref{prop:equivalence of formulations} (with $m=1$), we therefore arrive at the following.
\begin{proposition}\label{prop:cw vector formulation}
   Let $n,k\in\mathbb{N}$. A matrix $W\in\mathbb{R}^{n\times n}$ is $CW(n,k^2)$ if and only if there exists a vector $a\in\{0,\pm1\}^n$ with $W=c(a)$ such that
\begin{enumerate}[(i)]
	\item\label{item:cw sum constraint} $\sum_{s=0}^{n-1}a_s=\pm k$, and
	\item $a\star a=(k^2,0,0,\dots,0)$.
\end{enumerate}
\end{proposition}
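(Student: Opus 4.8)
The plan is to separate the asserted equivalence into its two coordinates~(i) and~(ii) and to observe that~(ii), together with the circulant structure of $W$, already encodes \Cref{def:cw matrices}, whereas~(i) is not an independent requirement but follows automatically from~(ii) via \Cref{prop:necessary condition for complenetary sequences}. Concretely, the biconditional will be established by proving that ``$W$ is $CW(n,k^2)$'' is equivalent to ``$W=c(a)$ for some $a\in\{0,\pm1\}^n$ satisfying~(ii)'', and then that any such $a$ necessarily also satisfies~(i).

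For the first equivalence I would argue exactly as in the paragraph preceding the statement: since $W$ is circulant, $W=c(a)$ for a unique $a\in\mathbb{R}^n$, and $W\in\{0,\pm1\}^{n\times n}$ precisely when $a\in\{0,\pm1\}^n$. Writing the entries of $W=c(a)$ as $W_{i,l}=a_{l-i}$ with indices modulo~$n$, a direct computation gives
\[
(WW^T)_{i,j}=\sum_{l=0}^{n-1}W_{i,l}W_{j,l}=\sum_{l=0}^{n-1}a_{l-i}a_{l-j}=(a\star a)_{i-j},
\]
so $WW^T=k^2I$ holds if and only if $(a\star a)_0=k^2$ and $(a\star a)_s=0$ for $s=1,\dots,n-1$, i.e.\ if and only if~\eqref{eq:autocorrelation CW} holds, which is exactly~(ii). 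To obtain~(i), I would then apply \Cref{prop:necessary condition for complenetary sequences} with $m=1$, $\nu_0=k^2$ and $\nu_1=0$: the one-element family $\{a\}$ is complementary with these constants by~(ii), whence $p_1(a)^2=\nu_1(n-1)+\nu_0=k^2$; since $p_1(a)=\sum_{s=0}^{n-1}a_s$ by~\eqref{eq:p_k}, this gives $\sum_{s=0}^{n-1}a_s=\pm k$, which is~(i). As~(ii) implies~(i), adjoining~(i) to the characterization above does not alter the class of matrices described, and the stated equivalence follows.

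I do not expect a genuine obstacle here, since the proposition is essentially the specialization of \Cref{prop:equivalence of formulations} to $m=1$ with alphabet $\{0,\pm1\}$, and that proposition already packages the implication (ii)$\Rightarrow$(i). The only points that require care are bookkeeping ones: fixing the indexing convention for $c(\cdot)$ so that the coordinate of $a\star a$ constrained to $k^2$ is indeed the zeroth one (a routine modular-arithmetic check), and making explicit in the write-up that~(i) is a \emph{derived} rather than an additional constraint — its role is to surface the row/column sum $\pm k$, which is precisely the parameter $\alpha$ appearing when the problem is later recast in \Cref{form:main}.
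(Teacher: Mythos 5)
Your proposal is correct and follows essentially the same route as the paper, which simply invokes \Cref{prop:equivalence of formulations} with $m=1$ (after noting, in the text preceding the statement, that $WW^T=k^2I$ for a circulant $W=c(a)$ is equivalent to $a\star a=(k^2,0,\dots,0)$). Your write-up merely makes explicit the autocorrelation computation and the application of \Cref{prop:necessary condition for complenetary sequences} with $\nu_0=k^2$, $\nu_1=0$ that the paper leaves implicit.
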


\begin{example}[A CW matrix of small order]
The vector $a=(-1, 1, 1, -1, 1, 0, 1, 0, 1, 1, 0, 0, -1)$ defines a $CW(13,3^2)$. Indeed, it verifies $\sum_{s=0}^{12}a_s=3$ and $a\star a=(9,0,0,0,0,0,0,0,0).$
\end{example}

The class of circulant weighing matrices are of interest, in part, because they include all \emph{circulant Hadamard matrices} (specially, a $CW(n,k^2)$ is a circulant Hadamard matrix whenever $n=k^2$ and $n=0\mod{4}$). The existence of a CW matrices for a given order and weight is, in general, not resolved. \emph{Strassler's table}, which originally appeared in 20 years ago in \cite{strassler}, gives the existence status of $CW(n,k^2)$ for $n\leq 200$ and $k\leq 10$. The table has been updated several times, but still contains open cases. The most up-to-date version known to the authors at the time of writing is contained in \cite[Appendix~A]{Tan-Strassler}. For other recent progress regarding CW matrices, see
\cite{TanThesis}. In Section~\ref{ssec:new designs} we solve two open cases by presenting two new circulant weighing matrices found with the DRA, namely, a $CW(126,8^2)$ and a $CW(198,10^2)$.

\subsection{D-optimal designs of circulant type}	
Let $n$ be an odd positive integer. Ehlich~\cite{E64} showed that the determinant of a square matrix of order $2n$ having $\{\pm 1\}$ entries satisfies the bound
$$|\det(D)|\leq 2^n(2n-1)(n-1)^{n-1}.$$
Such a matrix is said to be \emph{D-optimal} if it has maximal determinant, that is, the aforementioned determinate bound is attained. 

To construct a D-optimal matrix , it suffices to find two commuting square $\{\pm1\}$-matrices, $A$ and $B$, of order $n$ such that
\begin{equation}\label{eq:Dopt_matrix}
AA^T+BB^T=(2n-2)I+2J,
\end{equation}
where $J\in\mathbb{R}^{n\times n}$ denotes the matrix of all ones. A D-optimal matrix $D$ of order $2n$ can then be constructed from the matrices $A$ and $B$ as follows
\begin{equation}\label{eq:matrix_dopt}
D=\begin{pmatrix}
A&B\\
-B^T &A^T
\end{pmatrix}.
\end{equation}
This construction, originally proposed by Ehlich~\cite{E64} for the case in which $A$ and $B$ are circulant matrices, was later extended by Cohn~\cite{C89} to the setting in which the matrices commute. The former case constitutes a special type of D-optimal designs known as \emph{D-optimal designs of circulant type}.
\begin{definition}[D-optimal design of circulant type]
  A \emph{D-optimal design of circulant type} is a matrix $D$ of order $2n$ given by \eqref{eq:matrix_dopt} for a pair of circulant $\{\pm1\}$-matrices $A$ and $B$ of order $n$ satisfying~\eqref{eq:Dopt_matrix}.
  When we wish to refer to the underlying matrices $A$ and $B$ explicitly (rather than $D$), we shall say that $(A,B)$ is a D-optimal design of circulant type.
\end{definition}

Let $(A,B)$ be a D-optimal design of circulant type of order $2n$. As in the previous subsection, since both matrices $A$ and $B$ are circulant, there exist vectors $a,b\in\{\pm 1\}^n$ such that $A=c(a)$ and $B=c(b)$.  For such vectors, \eqref{eq:Dopt_matrix} is equivalent to
  \begin{equation}\label{eq:autocorrelation Dopt}
    a\star a+b\star b=(2n,2,2,\dots,2).
  \end{equation}
By applying \Cref{prop:equivalence of formulations} as before (now with $m=2$), we deduce the following characterization.
 \begin{proposition}\label{prop:dopt}
 	Let $n$ be an odd integer. A matrix $D$ is a D-optimal design of circulant type of order $2n$ if and only if there exist constants $\alpha,\beta\in\mathbb{Z}$ with $\alpha^2+\beta^2=4n-2$ and a pair of vectors ${(a,b)\in\{\pm 1\}^n\times \{\pm 1\}^n}$ such that $D$ satisfies~\eqref{eq:matrix_dopt} for $A=c(a)$ and $B=c(b)$, and the following assertions hold:
 	\begin{enumerate}[(i)]
 		\item $\sum_{s=0}^{n-1}a_s=\alpha$,
 		\item $\sum_{s=0}^{n-1}b_s=\beta$, and
 		\item $a\star a + b\star b=(2n,2,2,\dots,2)$.
 	\end{enumerate}
 \end{proposition}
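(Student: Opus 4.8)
The plan is to peel back the definition of a D-optimal design of circulant type and then invoke \Cref{prop:equivalence of formulations} with $m=2$, exactly as was done for circulant weighing matrices with $m=1$ in \Cref{prop:cw vector formulation}. The only ingredients beyond that proposition are (a) the translation of the matrix identity \eqref{eq:Dopt_matrix} into the periodic autocorrelation identity \eqref{eq:autocorrelation Dopt}, and (b) the remark that a pair of circulant matrices of the same order automatically commutes, so the ``commuting'' hypothesis in Cohn's construction is free in the circulant setting.

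For the forward implication I would start from a D-optimal design of circulant type $D$ of order $2n$: by definition it is given by \eqref{eq:matrix_dopt} with $A=c(a)$, $B=c(b)$ for some $a,b\in\{\pm1\}^n$ satisfying \eqref{eq:Dopt_matrix}. Writing $c(a)c(a)^T$ and $c(b)c(b)^T$ out entrywise identifies their $(i,j)$ entries as $(a\star a)_{j-i}$ and $(b\star b)_{j-i}$ (indices mod $n$); comparing with $(2n-2)I+2J$ then shows that \eqref{eq:Dopt_matrix} is equivalent to $a\star a+b\star b=(2n,2,\dots,2)$, which is condition (iii). Hence $\{a,b\}$ is complementary with $\nu_0=2n$ and $\nu_1=2$, and \Cref{prop:necessary condition for complenetary sequences} (equivalently the ``only if'' part of \Cref{prop:equivalence of formulations}) yields that the integers $\alpha:=\sum_{s=0}^{n-1}a_s$ and $\beta:=\sum_{s=0}^{n-1}b_s$ satisfy $\alpha^2+\beta^2=\nu_1(n-1)+\nu_0=2(n-1)+2n=4n-2$; integrality is immediate since the entries lie in $\{\pm1\}$, and since $n$ is odd one even gets $\alpha,\beta$ odd, consistent with $4n-2\equiv2\pmod4$. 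Conditions (i) and (ii) then hold by the very definition of $\alpha$ and $\beta$.

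For the converse I would take $\alpha,\beta\in\mathbb{Z}$ with $\alpha^2+\beta^2=4n-2$ and $(a,b)\in\{\pm1\}^n\times\{\pm1\}^n$ as hypothesized, with $D$ of the form \eqref{eq:matrix_dopt} for $A=c(a)$, $B=c(b)$ and (i)--(iii) holding. Running the entrywise computation in reverse turns (iii) back into \eqref{eq:Dopt_matrix}, and $A,B$ commute because all circulant matrices of a fixed order form a commutative algebra; thus $(A,B)$ meets every requirement of the definition and $D$ is a D-optimal design of circulant type. I do not expect a real obstacle: the only point deserving a word of care is that the constraint $\alpha^2+\beta^2=4n-2$ in the ``if'' direction is not an extra assumption that might be vacuous or inconsistent — but it is forced, since any $(a,b)\in\{\pm1\}^n\times\{\pm1\}^n$ satisfying (iii) makes the sums in (i)--(ii) obey exactly that relation by \Cref{prop:necessary condition for complenetary sequences}. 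Everything else is the routine bookkeeping of matching $(\nu_0,\nu_1)=(2n,2)$ against \Cref{prop:equivalence of formulations}.
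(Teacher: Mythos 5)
Your proposal is correct and follows essentially the same route as the paper: translate \eqref{eq:Dopt_matrix} into the autocorrelation identity \eqref{eq:autocorrelation Dopt} and then apply \Cref{prop:equivalence of formulations} with $m=2$ and $(\nu_0,\nu_1)=(2n,2)$, which forces $\alpha^2+\beta^2=2(n-1)+2n=4n-2$. The extra observations (circulant matrices automatically commute; $\alpha,\beta$ are odd since $n$ is odd) are sound and consistent with, though not required by, the paper's argument.
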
	
 \begin{example}[D-optimal design of circulant type of small order]
  The vectors $$a=(-1,1,-1,1,1,1,1,1,-1)\text{~~and~~}b=(-1,1,1,1,1,-1,1,1,1)$$ define a D-optimal design of order $9$. Let $\alpha=3$ and $\beta=5$. Then we have $\alpha^2+\beta^2=4n-2$ with
    $\sum_{s=0}^8a_s=\alpha$ and $\sum_{s=0}^8b_s=\beta,$
  and that $a\star a+b\star b = (18,2,2,2,2,2,2,2,2)$.
 \end{example}

 The existence of a D-optimal matrix for values $n < 100$ for which the Diophantine equation $x^2 +y^2 = 4n-2$ has solutions has been resolved in the affirmative with the exception of $n=99$; see \cite{dokovic2015dopt} and \cite[Table~1]{kotsireas2013newresults}. In other words, the first unresolved case of existence arises when $n=99$.

\subsection{Double circulant core Hadamard matrices}
Let $n$ be an odd positive integer. Recall that a \emph{Hadamard matrix} of order $n$ is a matrix $H\in\{\pm 1\}^{n\times n}$ such that $$HH^T=H^TH=nI.$$
There are many equivalent characterization of Hadamard matrices. For instance, they are precisely the $\{\pm 1\}$-matrices of maximal determinant \cite[Chapter~2]{Horadam:book}.

\begin{definition}[Double circulant core Hadamard matrix]\label{def:dcc hadamard matrices}
	Let $n\in\mathbb{N}$. A Hadamard matrix, $H$, of order $2n+2$ is said to be a \emph{Hadamard matrix with two circulant cores} if it is of either one of the following two forms
	 \begin{equation}\label{eq:dc circulant matrix}
	   \left(\begin{array}{cc|cccccc}
	     - & - & + & \dots & + & + & \dots & + \\
	     - & + & + & \dots & + & - & \dots & - \\ \hline
	     + & + &   &       &   &   &       &  \\
	     \vdots & \vdots & & A & & & B & \\
	     + & + &  & & & & & \\ \hline
	     + & - &  & & & & & \\
	     \vdots & \vdots & & B^T & & & -A^T & \\
	     + & - &  & & & & & \\
	   \end{array}\right),\qquad
	   \left(\begin{array}{cc|cccccc}
	     + & + &   &       &   &   &       &  \\
    	 \vdots & \vdots & & A & & & B & \\
	     + & + &  & & & & & \\ \hline
	     + & - &  & & & & & \\
	     \vdots & \vdots & & B^T & & & -A^T & \\
	     + & - &  & & & & & \\ \hline
	     - & - & + & \dots & + & + & \dots & + \\
	     - & + & + & \dots & + & - & \dots & - \\ 	
	   \end{array}\right),
	 \end{equation}
	where $A$ and $B$ are circulant matrices of order $n$, and $+$ and $-$ are shorthand for $+1$ and $-1$, respectively.
\end{definition}

We note that the two Hadamard matrices in~\eqref{eq:dc circulant matrix} are \emph{equivalent} in the sense that one can be obtained from the other via sequence of row/column negations and row/column permutation~\cite[\S2.1]{kotsireas2006hadamard}.

Two circulant matrices $A$ and $B$ satisfy \Cref{def:dcc hadamard matrices} precisely when \cite[p.~3]{kotsireas2006hadamard}
  \begin{equation}\label{eq:dcc hadamard matrix eq}
    AA^T+BB^T=(2n+2)I-2J.
  \end{equation}
Denote $A=c(a)$ and $B=c(b)$ for vectors $a,b\in\{\pm1\}^n$. It follows that \eqref{eq:dcc hadamard matrix eq} is equivalent to
   \begin{equation}
   \label{eq:auto cor dopt}
     a\star a+b\star b = (2n,-2,-2,\dots,-2).
   \end{equation}
Applying \Cref{prop:equivalence of formulations} as before, we deduce the following characterization.
 \begin{proposition}[Double circulant core Hadamard matrix]\label{prop:dcc hadamard matrices}
 	A pair of matrices $A$ and $B$ satisfy \eqref{eq:dcc hadamard matrix eq} and, consequently define a Hadamard matrix with two circulant cores, if and only if there exists vectors $a,b\in\{\pm 1\}^n$ such that $A=c(a), B=c(b)$ with
 	\begin{enumerate}[(i)]
 		\item\label{eq:DCHMi} $\sum_{s=0}^{n-1}a_s=\pm 1$
 		\item\label{eq:DCHMii} $\sum_{s=0}^{n-1}b_s=\pm 1$, and
 		\item\label{eq:DCHMiii} $a\star a + b\star b=(2n,-2,-2,\dots,-2)$.
 	\end{enumerate}
 \end{proposition}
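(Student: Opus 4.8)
The plan is to mirror the proofs of Propositions~\ref{prop:cw vector formulation} and~\ref{prop:dopt} almost verbatim, since the only difference is the sign pattern on the right-hand side of the autocorrelation equation, namely $(2n,-2,-2,\dots,-2)$ in place of $(k^2,0,\dots,0)$ or $(2n,2,2,\dots,2)$. First I would recall that, because $A$ and $B$ are circulant of order $n$, we may write $A=c(a)$ and $B=c(b)$ for unique vectors $a,b\in\{\pm1\}^n$, and that under this identification the matrix identity $AA^T+BB^T=(2n+2)I-2J$ is equivalent to the vector identity $a\star a+b\star b=(2n,-2,-2,\dots,-2)$; this equivalence is exactly the content of the cited passage \cite[p.~3]{kotsireas2006hadamard} together with equations~\eqref{eq:dcc hadamard matrix eq} and~\eqref{eq:auto cor dopt} stated just above, so I would simply invoke it. This reduces the claim to verifying that items~\eqref{eq:DCHMi}--\eqref{eq:DCHMiii} characterise when such a pair $(a,b)$ exists.

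Next I would apply \Cref{prop:equivalence of formulations} with $m=2$, $\mathcal{A}=\{\pm1\}$, $\nu_0=2n$ and $\nu_1=-2$. The proposition tells us that $\{a,b\}\subseteq\{\pm1\}^n$ satisfies $a\star a+b\star b=(2n,-2,\dots,-2)$ if and only if it solves the feasibility problem in \Cref{form:main} with $v=(2n,-2,\dots,-2)$ and some $\alpha=(\alpha_0,\alpha_1)\in\mathbb{R}^2$ with $\alpha_0^2+\alpha_1^2=\nu_1(n-1)+\nu_0=-2(n-1)+2n=2$. Since the entries of $a$ and $b$ lie in $\{\pm1\}$, the quantities $\alpha_0=\sum_{s=0}^{n-1}a_s$ and $\alpha_1=\sum_{s=0}^{n-1}b_s$ are integers, and the only integer solutions of $\alpha_0^2+\alpha_1^2=2$ are $\alpha_0,\alpha_1\in\{\pm1\}$. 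This is precisely what items~\eqref{eq:DCHMi} and~\eqref{eq:DCHMii} assert, while~\eqref{eq:DCHMiii} restates the autocorrelation condition, so the equivalence follows.

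The only step requiring any genuine (though still elementary) argument is the passage from ``there exists some $\alpha$ with $\alpha_0^2+\alpha_1^2=2$'' to the explicit constraint $\sum a_s=\pm1$, $\sum b_s=\pm1$; this is where the specific value $\nu_1(n-1)+\nu_0=2$ is used, and it is the analogue of observing in \Cref{prop:cw vector formulation} that $\sum a_s=\pm k$. I do not anticipate a real obstacle here: the computation $-2(n-1)+2n=2$ is immediate, and the integrality of the row sums forces $|\alpha_0|=|\alpha_1|=1$. One small point I would be careful to state clearly is the direction of the equivalence that goes from a feasible $(a,b)$ back to a Hadamard matrix with two circulant cores: one must note that any circulant $\{\pm1\}$-matrices $A=c(a)$, $B=c(b)$ with $a\star a+b\star b=(2n,-2,\dots,-2)$ do satisfy~\eqref{eq:dcc hadamard matrix eq} and hence, by the cited construction, assembling them into either block form in~\eqref{eq:dc circulant matrix} yields a genuine Hadamard matrix of order $2n+2$; this is again exactly \cite[p.~3]{kotsireas2006hadamard}, so it may simply be cited. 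With these observations in place the proof is a one-line appeal to \Cref{prop:equivalence of formulations} followed by the integer-solution remark.
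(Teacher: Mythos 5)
Your proposal is correct and follows essentially the same route as the paper: both apply \Cref{prop:equivalence of formulations} (with $m=2$, $\nu_0=2n$, $\nu_1=-2$) to obtain $\alpha_0^2+\alpha_1^2=-2(n-1)+2n=2$, deduce (i)--(ii) from the integrality of the row sums, and observe that (iii) is just the autocorrelation reformulation~\eqref{eq:auto cor dopt} already established before the statement. Your write-up merely spells out the integer-solution step and the backward direction more explicitly than the paper does.
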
	
\begin{proof}
We note that as a direct consequence of \Cref{prop:equivalence of formulations}, one has $$\alpha_1^2+\alpha_2^2=\left(\sum_{s=0}^{n-1}a_s\right)^2+\left(\sum_{s=0}^{n-1}b_s\right)^2=2,$$
from which \eqref{eq:DCHMi}-\eqref{eq:DCHMii} follows. Condition \eqref{eq:DCHMiii} is the same as \eqref{eq:auto cor dopt} whose equivalence was already discussed before the statement of the proposition.
\end{proof}

 \begin{example}
 	The vectors $a=(1,-1,-1,1,-1,1,1,1,-1)$ and $b=(-1,-1,1,1,-1,1,1,1,-1)$ define a double core circulant Hadamard matrix design. Note that
 	$$\sum_{s=0}^8a_s=1, \qquad \sum_{s=0}^8b_s=1,$$
 	and $a\star a+b\star b = (18,-2,-2,-2,-2,-2,-2,-2,-2)$.
 \end{example}

\section{Computational Results}\label{sec:results}
 In this section, we report the results of numerical experiments which demonstrate the performance of DR feasibility formulation (DRA). In Section~\ref{ssec:new designs}, the formulation is used to construct two circulant weighing matrices, namely, a $CW(126,8^2)$ and a $CW(198,10^2)$. Until this work, the existence  of these matrices was an open question. We implement the DRA, described in Algorithm~\ref{alg:DR}, in \emph{Python~2.7} with the stopping criteria outlined in the following remark.

 \begin{remark}[stopping criteria]\label{r:stopping critera}
  Let $C:=C_1\times C_2\times C_3$ and $D$ denote the product space sets in~\eqref{eq:product sets} and let $\epsilon>0$ denote a small real number. Further, let $(x_n)$ denote a sequence generated by the DRA operator $T_{D,C}$. Denoting $p_n=(q_n,q_n,q_n):=P_D(x_n)$, we terminate the DRA when either a prespecified time limit is reached or  the following condition is satisfied:
       $$ \| (P_{C_1}, P_{C_1},P_{C_1})(p_n) - P_C(p_n) \|<\epsilon, $$
    where we note that $P_C=(P_{C_1},P_{C_2},P_{C_3})$. Notice that, if this condition is satisfied and $\epsilon\approx0$, then
      $$ (P_{C_1}, P_{C_1},P_{C_1})(p_n) \approx (P_{C_1},P_{C_2},P_{C_3})(p_n).$$
    In other words, we have $P_{C_1}(q_n)\approx P_{C_2}(q_n)\approx P_{C_3}(q_n)$, which implies $P_D(P_C(p_n))\approx P_C(p_n)$.
 \end{remark}

 \begin{figure}[htb!]

 	\centering
 	\scalebox{0.9}{\input{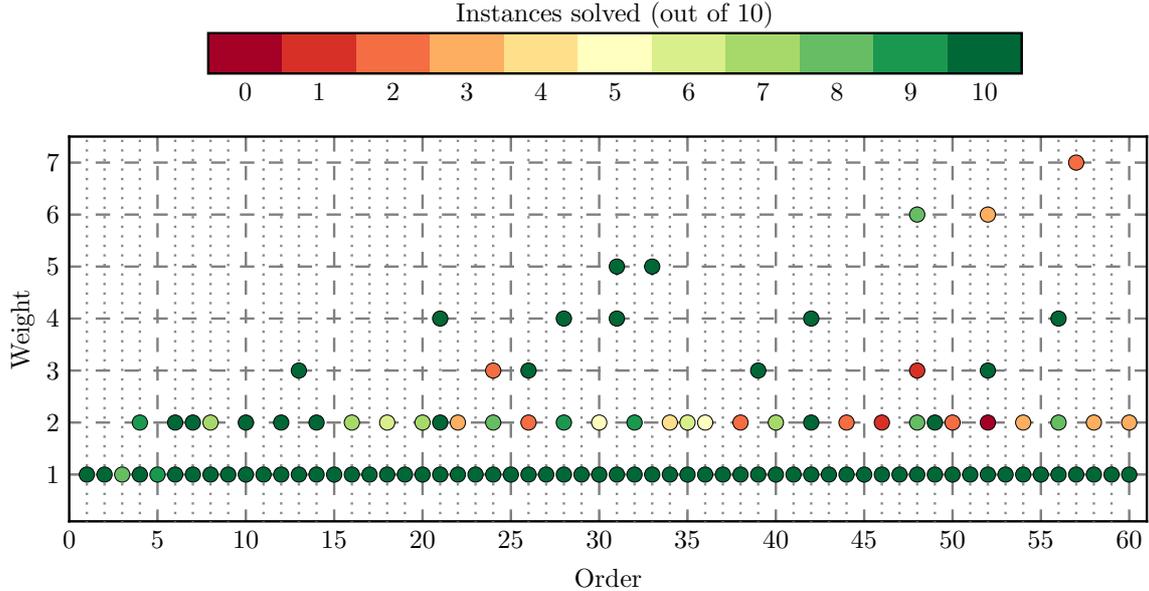}}
 	\caption{Results for CW matrices ($10$ random initialization, $3600$s time limit).\label{fig:CW results}}

 \end{figure}

 Once the stopping criteria in Remark~\ref{r:stopping critera} is satisfied, the resulting solution can be directly checked to see whether is conforms to Definition~\ref{def:general design}. Thus, whilst there may not be theory to guarantee that the DRA will convergence given enough time, if it does converge, then the question of  whether or not the output is a circulant design can be easily answered.

 Computational results for CW matrices are summarized in Figure~\ref{fig:CW results} and detailed computational results are included in Appendix~\ref{appendix:cw results}. Results for D-optimal designs of circulant-type and Hadamard matrices with two circulant cores, respectively, can be found in Table~\ref{t:Doptimal designs} and Table~\ref{t:two core hadamard}.

\begin{table}[htb!]
\centering
\caption{Experimental results for D-optimal designs ($10$ random initialization, $3600$s time limit).\label{t:Doptimal designs}}
\vspace{1ex}
\begin{tabular}{|c||S[table-format=2]S[table-format=3.2]S[table-format=7.1]|}
\hhline{-||---}
{Parameters} & {Solved instances} & {Average time (s)} & {Average iterations}\\
\hhline{=::===}
(3,1,3) 	&  10 &  0.00 &  3.4 \\
(5,3,3) 	&  10 &  0.00 &  6.6 \\
(7,1,5) 	&  9 &  0.01 &  12.7 \\
(9,3,5) 	&  10 &  0.19 &  398.3 \\
(13,1,7) 	&  7 &  0.13 &  349.7 \\
(13,5,5) 	&  7 &  0.16 &  403.6 \\
(15,3,7) 	&  10 &  0.24 &  591.8 \\
(19,5,7) 	&  10 &  0.81 &  1999.1 \\
(21,1,9) 	&  8 &  1.36 &  3424.9 \\
(23,3,9) 	&  8 &  2.02 &  5097.1 \\
(25,7,7) 	&  10 &  4.64 &  11668.6 \\
(27,5,9) 	&  9 &  116.50 &  297617.0 \\
(31,1,11) 	&  10 &  187.63 &  460501.0 \\
(33,3,11) 	&  8 &  553.44 &  1380160.0 \\
(33,7,9) 	&  8 &  810.97 &  2025880.0 \\
(37,5,11) 	&  3 &  1885.47 &  4399507.0 \\
(41,9,9) 	&  1 &  586.87 &  1352777.0 \\
(43,1,13) 	&  0 &  {--} &  {--} \\
(43,7,11) 	&  1 &  1207.20 &  2737865.0 \\
\hhline{-||---}
\end{tabular}
\end{table}

\begin{table}[htb!]
\centering
\caption{Experimental results for DHCM designs ($10$ random initialization, $3600$s time limit).\label{t:two core hadamard}}
\vspace{1ex}
\begin{tabular}{|c||S[table-format=2]S[table-format=3.2]S[table-format=7.1]|}
\hhline{-||---}
{Parameters} & {Solved instances} & {Average time (s)} & {Average iterations}\\
\hhline{=::===}
(1,1,1) 	&  10 &  0.00 &  1.7 \\
(3,1,1) 	&  10 &  0.01 &  33.6 \\
(5,1,1) 	&  10 &  0.00 &  5.9 \\
(7,1,1) 	&  8 &  0.01 &  35.8 \\
(9,1,1) 	&  10 &  0.01 &  35.2 \\
(11,1,1) 	&  10 &  0.04 &  89.2 \\
(13,1,1) 	&  9 &  0.10 &  222.2 \\
(15,1,1) 	&  10 &  0.10 &  241.8 \\
(17,1,1) 	&  10 &  0.22 &  549.3 \\
(19,1,1) 	&  10 &  1.68 &  4162.5 \\
(21,1,1) 	&  10 &  1.97 &  4764.0 \\
(23,1,1) 	&  10 &  2.26 &  5533.2 \\
(25,1,1) 	&  9 &  16.08 &  40468.1 \\
(27,1,1) 	&  10 &  76.10 &  192706.0 \\
(29,1,1) 	&  10 &  91.82 &  223875.0 \\
(31,1,1) 	&  10 &  428.61 &  1028850.0 \\
(33,1,1) 	&  10 &  849.84 &  2070120.0 \\
(35,1,1) 	&  4 &  2354.52 &  5864880.0 \\
(37,1,1) 	&  2 &  1883.67 &  4603068.0 \\
(39,1,1) 	&  1 &  2536.40 &  5916197.0 \\
\hhline{-||---}
\end{tabular}
\end{table}

\subsection{New circulant weighing matrices}\label{ssec:new designs}
 In this section, we state and prove our main result regarding the existence of two circulant weighing matrices. Our approach makes use of the following construction which is a consequence of~\cite[Theorem~2.3]{Arasu06}. Since this result appears without a proof in \cite[Section~2]{Arasu:Kotsireas:Koukouvinos:Seberry}, we show next how to derive it and give an explicit expression of the components of the constructed matrix in terms of the components of the original matrices.
\begin{theorem}\label{th:disjoint support theorem}
	Let $n,k\in\mathbb{N}$ with $n$ odd. Let $A$ and $B$ be two $CW(n,k^2)$ whose respective first rows, $a$ and $b$, have disjoint support\footnote{The \emph{support} of $c=(c_0,c_1,\ldots,c_{n-1})\in\mathbb{R}^n$ is the set $\{i\in\{0,\ldots,n-1\}: c_i\neq 0\}$.}. Then the circulant matrix $c(w)$ is a $CW(2n,4k^2)$ where the vector  $w=(w_0,w_1,w_2,\ldots,w_{2n-1})\in\mathbb{R}^{2n}$ is given component-wise by
\begin{equation}\label{eq:w_k}
w_s:=\left\{\begin{array}{ll}a_{\frac{s}{2}}+b_{\frac{s}{2}},&\text{if $s$ is even},\\
a_{\frac{s+n}{2}}-b_{\frac{s+n}{2}},&	\text{if $s$ is odd and $s\leq n-2$},\\
a_{\frac{s-n}{2}}-b_{\frac{s-n}{2}},&	\text{if $s$ is odd and $s> n-2$}.\end{array}\right.
\end{equation}

\end{theorem}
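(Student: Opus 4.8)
The plan is to verify directly, using the characterization of circulant weighing matrices via autocorrelation (Proposition~\ref{prop:cw vector formulation}), that the vector $w$ defined in~\eqref{eq:w_k} satisfies the two required conditions: that $w\in\{0,\pm1\}^{2n}$ with $\sum_{s=0}^{2n-1}w_s=\pm 2k$, and that $w\star w=(4k^2,0,0,\ldots,0)$. Equivalently, by Fact~\ref{fact:DFT properties}\eqref{item:DFT convulation}, the autocorrelation condition is the statement that $|\mathcal{F}(w)|^2$ is the constant vector $(4k^2,\ldots,4k^2)$, i.e.\ that $\mathcal{F}(w)$ has constant modulus $2k$ in every coordinate. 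I would work with whichever of the two formulations (time-domain convolution sums versus Fourier) makes the bookkeeping least painful; my expectation is that the Fourier side is cleaner, because the even/odd splitting in~\eqref{eq:w_k} is essentially a length-doubling ``decimation in time'' construction, and the DFT of $w$ of length $2n$ decomposes nicely in terms of the length-$n$ DFTs of $a$ and $b$.

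The key steps, in order, would be: \textbf{(1)} Check that $w$ has entries in $\{0,\pm1\}$. Since $a$ and $b$ have disjoint support, for each index $i$ at most one of $a_i,b_i$ is nonzero, so $a_i\pm b_i\in\{0,\pm 1\}$; this covers all three cases of~\eqref{eq:w_k} once one observes that the odd-$s$ cases are just $a_i-b_i$ for an appropriate $i$. \textbf{(2)} Compute the row sum: the even positions contribute $\sum_i(a_i+b_i)=(\sum a_i)+(\sum b_i)$ and the odd positions contribute $\sum_i(a_i-b_i)=(\sum a_i)-(\sum b_i)$ (here the two odd subcases together range over all $i\in\{0,\ldots,n-1\}$ exactly once, using that $n$ is odd so the parities and ranges match up). By Proposition~\ref{prop:cw vector formulation}\eqref{item:cw sum constraint}, $\sum a_i=\pm k$ and $\sum b_i=\pm k$; adding the two contributions gives $2\sum a_i=\pm 2k$. \textbf{(3)} Establish the autocorrelation identity. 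I would index the length-$2n$ DFT frequencies and split $\mathcal{F}(w)$ into contributions from even and odd positions of $w$; the even part reassembles into $\mathcal{F}(a+b)$ evaluated at a rescaled frequency, and the odd part, after accounting for the $s\le n-2$ versus $s>n-2$ shift (which corresponds to multiplying by a sign/root-of-unity factor because $n$ is odd), reassembles into $\mathcal{F}(a-b)$ at the same rescaled frequency times a phase. Then $|\mathcal{F}(w)|^2$ at each frequency becomes an expression in $|\mathcal{F}(a+b)|^2$, $|\mathcal{F}(a-b)|^2$, and a cross term. Since $a\star a=(k^2,0,\ldots,0)$ and $b\star b=(k^2,0,\ldots,0)$, we have $|\mathcal{F}(a)|^2\equiv k^2$ and $|\mathcal{F}(b)|^2\equiv k^2$; the disjoint-support hypothesis is what forces the cross term $a\star b$ to behave so that $|\mathcal{F}(a+b)|^2+|\mathcal{F}(a-b)|^2$ and the mixed contribution combine to the constant $4k^2$. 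Concretely I would show the frequency-$t$ value of $|\mathcal{F}(w)|^2$ equals $|\mathcal{F}(a)(t')|^2+|\mathcal{F}(b)(t')|^2$ plus cross terms that cancel, yielding $2k^2+2k^2=4k^2$.

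\textbf{Alternative / cross-check in the time domain:} one can instead compute $(w\star w)_s$ directly by splitting the summation index by parity and tracking how shifts by $s$ interact with the even/odd case distinction in~\eqref{eq:w_k}. This produces, for each $s$, a combination of $(a\star a)_{s'}$, $(b\star b)_{s'}$, and $(a\star b)_{s'}+(b\star a)_{s'}$ at appropriate indices $s'$; the autocorrelation hypotheses kill the first two except at $s'=0$, and the disjoint-support hypothesis (together with $n$ odd, so that no ``collision'' of the two odd subcases occurs) forces the cross-correlation terms to cancel in pairs. I expect this to be the messier route but a useful sanity check on the Fourier computation.

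\textbf{Main obstacle.} The conceptual content is light---it is ``merely'' a verification---so the real difficulty is combinatorial care in the index arithmetic: keeping the three cases of~\eqref{eq:w_k} straight under the convolution/DFT, correctly handling indices modulo $2n$ versus modulo $n$, and pinning down exactly where the oddness of $n$ and the disjointness of the supports are used (they enter precisely to rule out index collisions and to make the cross terms cancel). I would organize the proof around a single clean substitution $s=2i$ or $s=2i\pm 1$ and a matching split of the correlation sum, state the cancellation of cross-correlation terms as the one genuine lemma, and then let the autocorrelation hypotheses on $A$ and $B$ finish it. The payoff section (applying this with explicit $CW(63,49)$-type pieces, or whatever base matrices yield $CW(126,64)$ and $CW(198,100)$) would then be essentially immediate.
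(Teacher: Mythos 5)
Your proposal is correct in outline but takes a genuinely different route from the paper. The paper never verifies the autocorrelation of $w$ directly: it recasts $a$ and $b$ as group-ring elements $\mathbf{A}(x^2),\mathbf{B}(x^2)\in\mathbb{Z}[G]$ for $G$ cyclic of order $2n$, checks that the vectors corresponding to $\mathbf{A}(x^2)$, $x^n\mathbf{A}(x^2)$, $\mathbf{B}(x^2)$, $x^n\mathbf{B}(x^2)$ are $CW(2n,k^2)$'s with pairwise disjoint supports, and then invokes \cite[Theorem~2.3]{Arasu06} to conclude that $(1+x^n)\mathbf{A}(x^2)+(1-x^n)\mathbf{B}(x^2)$ defines a $CW(2n,4k^2)$; the only computation is unwinding that formal sum into the component formula \eqref{eq:w_k}. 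Your direct verification is self-contained, and in the Fourier form you favour it is genuinely short: the formula \eqref{eq:w_k} says precisely that $\mathbf{W}(x):=\sum_s w_sx^s$ equals $(1+x^n)\mathbf{A}(x^2)+(1-x^n)\mathbf{B}(x^2)$ modulo $x^{2n}=1$, so at a $2n$-th root of unity $\omega^t$ one has $\omega^{nt}=(-1)^t$ and hence $\mathbf{W}(\omega^t)=2\mathbf{A}(\omega^{2t})$ for $t$ even and $2\mathbf{B}(\omega^{2t})$ for $t$ odd, giving $|\mathcal{F}(w)|^2\equiv 4k^2$ at once. The paper's route is a two-line appeal to known machinery that situates the construction in the standard group-ring framework; yours removes the external dependency and exposes the mechanism.

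One point needs correcting before you execute it: you attribute the cancellation of the cross terms in $|\mathcal{F}(w)|^2$ to the disjoint-support hypothesis. That is a misattribution. Disjoint support is used only to guarantee $w\in\{0,\pm1\}^{2n}$ (your step (1)); it plays no role whatsoever in the autocorrelation. The cross terms vanish because the odd-indexed subsequence of $w$ is the cyclic shift of $a-b$ by $(n-1)/2$, so the decimation phase works out to the purely real factor $(-1)^t$ --- this is exactly where the oddness of $n$ enters --- together with the fact that $(\mathbf{A}-\mathbf{B})\overline{(\mathbf{A}+\mathbf{B})}$, evaluated at any $n$-th root of unity, has real part $|\mathbf{A}|^2-|\mathbf{B}|^2=k^2-k^2=0$. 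Your closing identity is then just the parallelogram law, $|\mathbf{A}+\mathbf{B}|^2+|\mathbf{A}-\mathbf{B}|^2=2|\mathbf{A}|^2+2|\mathbf{B}|^2=4k^2$. With that bookkeeping fixed, the argument goes through as you describe.
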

\begin{proof}
Let $G=\langle x\rangle=\left\{1,x,\ldots,x^{2n-1}\right\}$ be a cyclic group of order $2n$ generated by $x$, where $x^{2n}=1$. Clearly, the element $x^n$ of the group $G$ has order $2$.

Let $a,b\in\mathbb{R}^n$ denote the first rows of $A$ and $B$, respectively, and consider the generating functions given by
$$\mathbf{A}(x):=\sum_{s=0}^{n-1}a_sx^s\quad\text{and}\quad \mathbf{B}(x):=\sum_{s=0}^{n-1}b_sx^s.$$
Then $\mathbf{A}(x^2),\mathbf{B}(x^2)\in \mathbb{Z}[G]$ where $\mathbb{Z}[G]$ denotes the group ring of $G$ over $\mathbb{Z}$. Let $\widehat{a},\widehat{b}\in\mathbb{R}^{2n}$ denote the vectors associated with $\mathbf{A}(x^2)$ and $\mathbf{B}(x^2)$, respectively; that is,
\begin{equation}\label{eq:a_b_hat}
\widehat{a}:=(a_0,0,a_1,0,\ldots,a_{n-1},0)\quad\text{and}\quad\widehat{b}:=(b_0,0,b_1,0,\ldots,b_{n-1},0).
\end{equation}
Since $A$ and $B$ are $CW(n,k^2)$, a direct verification using Proposition~\ref{prop:cw vector formulation} shows that the circulant matrices defined by $\widehat{a}$ and $\widehat{b}$, namely $\widehat{A}:=c(\widehat{a})$ and $\widehat{B}:=c(\widehat{b})$, are $CW(2n,k^2)$.

Let $\widetilde{a},\widetilde{b}\in\mathbb{R}^n$ be the vectors associated with the formal sums $x^n\mathbf{A}(x^2)$ and $x^n\mathbf{B}(x^2)$, respectively. Since
$$x^n\mathbf{A}(x^2)=\sum_{s=0}^{n-1}a_sx^{2s+n}=\sum_{s=0}^{\frac{n-1}{2}}a_sx^{2s+n}+\sum_{s=\frac{n+1}{2}}^{n-1}a_sx^{2s-n},$$
it follows that
\begin{equation}\label{eq:a_tilde}
\widetilde{a}=\left(0,a_{\frac{n+1}{2}},0,a_{\frac{n+3}{2}},0,\ldots,a_{n-1},0,a_0,0,a_1,0,\ldots,0,a_{\frac{n-1}{2}}\right).
\end{equation}
The analogous expression holds for $\widetilde{b}$.

Consider now the circulant matrices $\widetilde{A}=c(\widetilde{a})$ and $\widetilde{B}=c(\widetilde{b})$ associated with the formal sums $x^n\mathbf{A}(x^2)$ and $x^n\mathbf{B}(x^2)$, respectively.
Since $a$ and $b$ have disjoint support and $n$ is odd, one can easily check that  $\widehat{a},\widetilde{a},\widehat{b},\widetilde{b}$ have pairwise disjoint support.
Therefore, all the assumptions of \cite[Theorem~2.3]{Arasu06} hold, and we deduce that the vector  $w\in\mathbb{R}^{2n}$ associated with the formal sum $\mathbf{W}(x)$ given by $$\mathbf{W}(x):=(1+x^n)\mathbf{A}(x^2)+(1-x^n)\mathbf{B}(x^2)\in\mathbb{Z}[G]$$
is such that the circulant matrix $c(w)$ is $CW(2n,4k^2)$.

To conclude the proof, we just need to check that the components of $w$ are given by~\eqref{eq:w_k}. Indeed, since
$$w=\widehat{a}+\widetilde{a}+\widehat{b}-\widetilde{b},$$
the expression given by~\eqref{eq:w_k} follows from~\eqref{eq:a_b_hat} and~\eqref{eq:a_tilde}.
\end{proof}

\begin{theorem}\label{th:cw existence}
	Both $CW(126,8^2)$ and $CW(198,10^2)$ exist.
\end{theorem}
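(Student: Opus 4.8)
The plan is to reduce both cases to Theorem~\ref{th:disjoint support theorem}. Observe that $126 = 2\cdot 63$ with $63$ odd and $64 = 4\cdot 4^2$, while $198 = 2\cdot 99$ with $99$ odd and $100 = 4\cdot 5^2$. Hence, to construct a $CW(126,8^2)$ it suffices to exhibit two $CW(63,4^2)$ whose first rows have disjoint support, and to construct a $CW(198,10^2)$ it suffices to exhibit two $CW(99,5^2)$ whose first rows have disjoint support. Since in a $CW(n,k^2)$ each row has exactly $k^2$ nonzero entries, for the first case we need $16+16=32\le 63$ nonzero positions and for the second $25+25=50\le 99$, so in both cases disjoint supports are numerically feasible.

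To produce the required building blocks, I would run the Douglas--Rachford scheme of Algorithm~\ref{alg:DR} applied to Formulation~\ref{form:main} with $m=1$, $\mathcal{A}=\{0,\pm1\}$, row-sum constraint $\sum_s a_s=\pm k$, and autocorrelation target $v=(k^2,0,\dots,0)$, exactly as characterized in Proposition~\ref{prop:cw vector formulation}, with $(n,k)=(63,4)$ and $(n,k)=(99,5)$ respectively. Once a first solution $a$ is obtained, I would search for a second one, $b$, by rerunning the same scheme with the alphabet constraint $C_1$ modified (as permitted by the variant of $C_1$ discussed earlier) so that the alphabet is the singleton $\{0\}$ at every position in the support of $a$ and $\{0,\pm1\}$ elsewhere; any solution then automatically has support disjoint from that of $a$. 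The pair $a,b$ is verified directly by checking the row sums and the periodic autocorrelations, so Proposition~\ref{prop:cw vector formulation} certifies that $c(a)$ and $c(b)$ are circulant weighing matrices of the claimed orders and weights, and disjointness of supports is confirmed by inspection.

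With such a pair in hand, Theorem~\ref{th:disjoint support theorem} applies verbatim: substituting $a$ and $b$ into formula~\eqref{eq:w_k} yields an explicit vector $w\in\mathbb{R}^{2n}$ for which $c(w)$ is the desired $CW(2n,4k^2)$. For completeness I would display the explicit first rows $a,b$ for $n=63$ and $n=99$ together with the resulting vectors $w$ for $2n=126$ and $2n=198$, and remark that each $w$ can be re-verified independently through Proposition~\ref{prop:cw vector formulation}.

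The genuinely non-trivial step is the computational one: since the Douglas--Rachford iteration is used here only as a heuristic, with no convergence guarantee in the non-convex setting (cf.\ Fact~\ref{fact:DRA} and the surrounding discussion), there is no a priori assurance that it will locate the building-block matrices, nor even that a $CW(63,4^2)$, respectively $CW(99,5^2)$, with support avoiding a prescribed one exists and can be found within the time budget. Everything downstream --- the finitely many arithmetic identities needed to confirm the $CW$ property and the application of Theorem~\ref{th:disjoint support theorem} --- is entirely routine once the explicit vectors are produced, so the proof ultimately consists of exhibiting the vectors and carrying out these finite checks.
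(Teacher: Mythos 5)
Your strategy is the same as the paper's: find a $CW(63,4^2)$ and a $CW(99,5^2)$ with the Douglas--Rachford algorithm, pair each with a second such matrix having disjoint support, and invoke Theorem~\ref{th:disjoint support theorem} to double the order and quadruple the weight. The one methodological difference is how the second sequence $b$ is obtained: you propose a fresh DRA run with the alphabet restricted to $\{0\}$ on the support of $a$, whereas the paper simply takes $b$ to be a cyclic permutation of $a$. The paper's choice is cheaper and more robust --- any cyclic shift of the first row of a $CW(n,k^2)$ is again the first row of a $CW(n,k^2)$, so no second search is needed and one only has to check whether some shift of $a$ has support disjoint from $a$; your variant requires a second non-convex search whose feasibility is not guaranteed even when a suitable shift exists. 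Beyond that, bear in mind that for an existence theorem of this kind the explicit vectors \emph{are} the proof: as written, your argument is a correct programme for producing a proof, but it is not yet a proof, since the finite certificates ($a$, $b$, and the resulting $w$ for each case, together with the verification of their row sums and autocorrelations) are exactly what must be exhibited and are not present.
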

\begin{proof}
Using the DRA, the following $CW(63,4^2)$ was found
\begin{Verbatim}[fontsize=\scriptsize]
a = [ 1,-1, 0, 0, 0, 0, 0, 1, 1, 0, 1, 0,-1, 0, 0,-1, 0, 0, 0, 0, 0, 0, 0, 0, 1, 0, 1,-1, 0, 0, 1, 0,
      0, 0,-1, 0, 0, 0, 0, 0,-1, 0, 0, 0, 0, 0, 0, 0, 0, 0, 0, 0, 0, 1, 0, 0, 0, 0, 1, 0, 0, 0, 1].
\end{Verbatim}
\noindent It has disjoint support with its cyclic permutation, $b$, given by
\begin{Verbatim}[fontsize=\scriptsize]
b = [ 0, 0, 0, 0, 0, 0, 0, 0, 0, 0, 0, 1, 0, 0, 0, 0, 1, 0, 0, 0, 1, 1,-1, 0, 0, 0, 0, 0, 1, 1, 0, 1,
      0,-1, 0, 0,-1, 0, 0, 0, 0, 0, 0, 0, 0, 1, 0, 1,-1, 0, 0, 1, 0, 0, 0,-1, 0, 0, 0, 0, 0,-1, 0].
\end{Verbatim}
\noindent The construction in Theorem~\ref{th:disjoint support theorem} applied to $a$ and $b$ yields
\begin{Verbatim}[fontsize=\scriptsize]
w = [ 1, 0,-1, 1, 0,-1, 0, 0, 0, 1, 0, 0, 0, 0, 1, 0, 1,-1, 0, 0, 1, 0, 1, 0,-1, 0, 0,-1, 0, 0,-1,-1,
      1, 1, 0, 0, 0, 0, 0,-1, 1, 0, 1, 1,-1, 0, 0, 1, 1, 0, 0, 0, 1, 1,-1, 0, 1, 0, 1, 1, 1, 1, 1, 1,
      0,-1,-1, 0,-1, 0, 0, 0,-1, 0, 0, 0, 0, 1, 0, 1,-1, 0, 0, 1, 0,-1, 0,-1, 0, 0, 1, 0, 0,-1, 1,-1,
     -1, 0, 0, 0, 0, 0, 1,-1, 0,-1, 1, 1, 0, 0,-1, 1, 0, 0, 0, 1, 1,-1, 0,-1, 0,-1,-1, 1, 1,-1],
\end{Verbatim}
and, consequently, the vector $w$ defines a $CW(126,8^2)$.

Similarly, using the DRA, the following $CW(99,5^2)$ was found
\begin{Verbatim}[fontsize=\scriptsize]
a = [-1, 0, 0, 1, 0, 0, 1, 0, 0, 1, 0, 0, 0, 0, 0,-1, 0, 0, 1, 0, 0, 0, 0, 0,-1, 0, 0, 1, 0, 0, 1, 0, 0,
     -1, 0, 0,-1, 0, 0, 0, 0, 0, 1, 0, 0, 1, 0, 0, 1, 0, 0, 1, 0, 0, 0, 0, 0, 1, 0, 0, 1, 0, 0,-1, 0, 0,
      1, 0, 0,-1, 0, 0, 1, 0, 0, 0, 0, 0, 1, 0, 0,-1, 0, 0, 0, 0, 0, 0, 0, 0,-1, 0, 0, 0, 0, 0,-1, 0, 0]
\end{Verbatim}
\noindent It has disjoint support with its cyclic permutation, $b$, given by
\begin{Verbatim}[fontsize=\scriptsize]
b = [ 0,-1, 0, 0, 1, 0, 0, 1, 0, 0, 1, 0, 0, 0, 0, 0,-1, 0, 0, 1, 0, 0, 0, 0, 0,-1, 0, 0, 1, 0, 0, 1, 0,
      0,-1, 0, 0,-1, 0, 0, 0, 0, 0, 1, 0, 0, 1, 0, 0, 1, 0, 0, 1, 0, 0, 0, 0, 0, 1, 0, 0, 1, 0, 0,-1, 0,
      0, 1, 0, 0,-1, 0, 0, 1, 0, 0, 0, 0, 0, 1, 0, 0,-1, 0, 0, 0, 0, 0, 0, 0, 0,-1, 0, 0, 0, 0, 0,-1, 0].
\end{Verbatim}
\noindent The construction in Theorem~\ref{th:disjoint support theorem} applied to $a$ and $b$ yields
\begin{Verbatim}[fontsize=\scriptsize]
w = [-1, 0,-1, 1, 0,-1, 1, 0, 1, 0, 0, 0, 1, 0, 1, 1, 0,-1, 1, 0, 1, 1, 0,-1, 0, 0, 0,-1, 0, 1,-1, 0,
     -1, 1, 0,-1, 1, 0, 1,-1, 0, 1, 0, 0, 0, 1, 0,-1,-1, 0,-1, 0, 0, 0, 1, 0, 1, 1, 0,-1, 1, 0, 1,-1,
      0, 1,-1, 0,-1, 0, 0, 0,-1, 0,-1, 0, 0, 0, 0, 0, 0,-1, 0, 1, 1, 0, 1, 0, 0, 0, 1, 0, 1,-1, 0, 1,
      1, 0, 1,-1, 0, 1, 1, 0, 1, 1, 0,-1, 0, 0, 0, 1, 0,-1, 1, 0, 1, 1, 0,-1, 1, 0, 1, 0, 0, 0,-1, 0,
     -1,-1, 0, 1, 1, 0, 1, 1, 0,-1,-1, 0,-1, 0, 0, 0, 1, 0, 1,-1, 0, 1, 0, 0, 0, 1, 0,-1, 1, 0, 1, 1,
      0,-1,-1, 0,-1,-1, 0, 1, 0, 0, 0,-1, 0, 1, 0, 0, 0, 0, 0, 0,-1, 0,-1, 1, 0,-1, 0, 0, 0, 1, 0,-1,
     -1, 0,-1, 1, 0,-1],
\end{Verbatim}
and, consequently, the vector $w$ defines a $CW(198,10^2)$.
\end{proof}

\begin{remark}\label{r:strassler table}
	Theorem~\ref{th:cw existence} resolves two open cases in the latest update of Strassler's Table appearing in the 2016 work of Tan~\cite[Appendix~A]{Tan-Strassler}. We also note that a previous version of Strassler's Table published in 2010 by Arasu~\&~Gutman~\cite[Table~3]{ArasuGutman} also listed these two cases as open. Despite the fact that these two cases have remained unresolved in multiple updates of Strassler's table, during the preparing of this manuscript (after independently proving Theorem~\ref{th:cw existence}) we discovered that existence can actually be deduced by combining either of the aforementioned versions of Strassler's table with a much older result of Arasu~\&~Dillon~\cite[Theorem~2.2]{ArasuDillon} which appeared in 1999. Specifically, the existence of $CW(126,8^2)$ and $CW(198,10^2)$ follows by respectively applying this result to $CW(21,4^2)$ and $CW(33,5^2)$, with $m=3$. In fact, the existence of $CW(198,10^2)$ was already claimed in~\cite{ArasuDillon}. This seems to have been missed until now.
\end{remark}

\begin{remark}
    Although Strassler's original table \cite{strassler} correctly states that $CW(196,4^2)$ exist, in both of updates, \cite[Appendix~A]{Tan-Strassler} and \cite[Table~3]{ArasuGutman}, its status is incorrectly shown as not existing. The same error appears in \cite[\S5]{gutmanthesis} and \cite[p.~144]{TanThesis}. Indeed, we obtained the following $CW(28,4^2)$ with the DRA
\begin{Verbatim}[fontsize=\scriptsize]
a = [1, 0, 1, -1, -1, 1, 0, 1, -1, 0, 0, 1, 0, 0, -1, 0, -1, -1, 1, 1, 0, 1, 1, 0, 0, 1, 0, 0],
\end{Verbatim}
from which a $CW(196,4^2)$ can be deduced by appending $0_6$ after each component
\begin{Verbatim}[fontsize=\scriptsize]
w = [ 1, 0, 0, 0, 0, 0, 0, 0, 0, 0, 0, 0, 0, 0, 1, 0, 0, 0, 0, 0, 0,-1, 0, 0, 0, 0, 0, 0,-1, 0, 0, 0,
      0, 0, 0, 1, 0, 0, 0, 0, 0, 0, 0, 0, 0, 0, 0, 0, 0, 1, 0, 0, 0, 0, 0, 0,-1, 0, 0, 0, 0, 0, 0, 0,
      0, 0, 0, 0, 0, 0, 0, 0, 0, 0, 0, 0, 0, 1, 0, 0, 0, 0, 0, 0, 0, 0, 0, 0, 0, 0, 0, 0, 0, 0, 0, 0,
      0, 0,-1, 0, 0, 0, 0, 0, 0, 0, 0, 0, 0, 0, 0, 0,-1, 0, 0, 0, 0, 0, 0,-1, 0, 0, 0, 0, 0, 0, 1, 0,
      0, 0, 0, 0, 0, 1, 0, 0, 0, 0, 0, 0, 0, 0, 0, 0, 0, 0, 0, 1, 0, 0, 0, 0, 0, 0, 1, 0, 0, 0, 0, 0,
      0, 0, 0, 0, 0, 0, 0, 0, 0, 0, 0, 0, 0, 0, 0, 1, 0, 0, 0, 0, 0, 0, 0, 0, 0, 0, 0, 0, 0, 0, 0, 0,
      0, 0, 0, 0],
\end{Verbatim}
since $196=28\cdot 7$.
\end{remark}

\paragraph{Acknowledgments.}
  This work is dedicated to the late Jonathan M. Borwein who suggested this project during his 2016 sabbatical in Canada.
  FJAA and RC were partially supported by MINECO of Spain  and  ERDF of EU, grant MTM2014-59179-C2-1-P. FJAA was supported by the Ram\'on y Cajal program by MINECO of Spain  and  ERDF of EU (RYC-2013-13327) and RC was supported by MINECO of Spain and ESF of EU (BES-2015-073360) under the program ``Ayudas para contratos predoctorales para la formaci\'on de doctores 2015''.
  IK is supported by an NSERC grant.
  MKT was supported by Deutsche Forschungsgemeinschaft RTG2088 and by a Postdoctoral Fellowship from the Alexander von Humboldt Foundation.

\clearpage

\begin{appendices}
\section{Detailed results for CW matrices}\label{appendix:cw results}
\begin{table}[!h]
\centering
\caption{Results for CW matrices ($10$ random initialization, $3600$s time limit).\label{t:CW}}
\begin{adjustbox}{totalheight=0.875\textheight}
\begin{tabular}{|c||S[table-format=2]S[table-format=3.2]S[table-format=7.1]|}
	\hhline{-||---}
	{$(n,k)$} & {No.\ Solved} & {Av.\ time (s)} & {Av.\ iterations}\\
	\hhline{=::===}
	(1,1)	&  10 &  0.00 &  1.5 \\
	(2,1)	&  10 &  0.00 &  1.4 \\
	(3,1)	&   8 &  0.00 &  3.1 \\
	(4,1)	&  10 &  0.00 &  5.6 \\
	(5,1)	&   9 &  0.00 &  4.0 \\
	(6,1)	&  10 &  0.00 &  4.1 \\
	(7,1)	&  10 &  0.00 &  3.3 \\
	(8,1)	&  10 &  0.00 &  3.5 \\
	(9,1)	&  10 &  0.00 &  4.0 \\
	(10,1)	&  10 &  0.00 &  4.5 \\
	(11,1)	&  10 &  0.00 &  4.0 \\
	(12,1)	&  10 &  0.00 &  3.8 \\
	(13,1)	&  10 &  0.00 &  4.7 \\
	(14,1)	&  10 &  0.00 &  3.8 \\
	(15,1)	&  10 &  0.00 &  5.7 \\
	(16,1)	&  10 &  0.00 &  6.0 \\
	(17,1)	&  10 &  0.00 &  5.7 \\
	(18,1)	&  10 &  0.00 &  4.6 \\
	(19,1)	&  10 &  0.00 &  7.0 \\
	(20,1)	&  10 &  0.00 &  6.2 \\
	(21,1)	&  10 &  0.00 &  6.3 \\
	(22,1)	&  10 &  0.00 &  8.2 \\
	(23,1)	&  10 &  0.00 &  6.9 \\
	(24,1)	&  10 &  0.00 &  6.2 \\
	(25,1)	&  10 &  0.00 &  4.8 \\
	(26,1)	&  10 &  0.00 &  5.2 \\
	(27,1)	&  10 &  0.00 &  6.1 \\
	(28,1)	&  10 &  0.00 &  6.7 \\
	(29,1)	&  10 &  0.00 &  8.7 \\
	(30,1)	&  10 &  0.00 &  7.9 \\
	(4,2)	&  9 &  0.00 &  5.1 \\
	(6,2)	&  10 &  0.00 &  8.1 \\
	(7,2)	&  10 &  0.09 &  328.9 \\
	(8,2)	&  7 &  0.02 &  81.4 \\
	(10,2)	&  10 &  0.04 &  180.5 \\
	(12,2)	&  10 &  0.05 &  211.7 \\
	(14,2)	&  10 &  0.16 &  649.2 \\
	(16,2)	&  7 &  0.09 &  373.0 \\
	(18,2)	&  6 &  0.03 &  110.5 \\
	(20,2)	&  7 &  0.30 &  1213.9 \\
	(21,2)	&  10 &  0.32 &  1165.9 \\
	(22,2)	&  3 &  0.02 &  67.7 \\
	(24,2)	&  8 &  0.13 &  506.5 \\
	(26,2)	&  2 &  0.03 &  101.0 \\
	(28,2)	&  9 &  0.19 &  703.3 \\
	(30,2)	&  5 &  0.07 &  232.6 \\
	(13,3)	&  10 &  0.05 &  172.0 \\
	(24,3)	&  2 &  10.93 &  42967.5 \\
	(26,3)	&  10 &  1.89 &  7162.3 \\
	(21,4)	&  10 &  11.47 &  45012.3 \\
	(28,4)	&  10 &  15.89 &  60377.3 \\
	(31,1)	&  10 &  0.00 &  9.0 \\	
	(32,1)	&  10 &  0.00 &  9.8 \\
	(33,1)	&  10 &  0.00 &  9.4 \\
	(34,1)	&  10 &  0.00 &  9.7 \\
	\hhline{-||---}	
\end{tabular}	
\quad
\begin{tabular}{|c||S[table-format=2]S[table-format=4.2]S[table-format=7.1]|}
	\hhline{-||---}
	{$(n,k)$} & {No.\ Solved} & {Av.\ time (s)} & {Av.\ iterations}\\
	\hhline{=::===}		
	(35,1)	&  10 &  0.00 &  8.5 \\
	(36,1)	&  10 &  0.00 &  8.3 \\
	(37,1)	&  10 &  0.00 &  11.7 \\
	(38,1)	&  10 &  0.00 &  6.2 \\
	(39,1)	&  10 &  0.00 &  9.9 \\
	(40,1)	&  10 &  0.00 &  10.5 \\
	(41,1)	&  10 &  0.00 &  11.8 \\
	(42,1)	&  10 &  0.00 &  11.8 \\
	(43,1)	&  10 &  0.00 &  9.1 \\
	(44,1)	&  10 &  0.00 &  8.7 \\
	(45,1)	&  10 &  0.00 &  9.7 \\
	(46,1)	&  10 &  0.00 &  14.5 \\
	(47,1)	&  10 &  0.00 &  9.3 \\
	(48,1)	&  10 &  0.00 &  10.9 \\
	(49,1)	&  10 &  0.00 &  11.9 \\
	(50,1)	&  10 &  0.00 &  13.4 \\
	(51,1)	&  10 &  0.00 &  11.7 \\
	(52,1)	&  10 &  0.00 &  16.3 \\
	(53,1)	&  10 &  0.01 &  17.8 \\
	(54,1)	&  10 &  0.00 &  16.2 \\
	(55,1)	&  10 &  0.00 &  14.7 \\
	(56,1)	&  10 &  0.00 &  10.4 \\
	(57,1)	&  10 &  0.00 &  15.9 \\
	(58,1)	&  10 &  0.00 &  11.6 \\
	(59,1)	&  10 &  0.00 &  12.4 \\
	(60,1)	&  10 &  0.00 &  16.1 \\
	(32,2)	&  9 &  0.25 &  984.3 \\
	(34,2)	&  4 &  0.06 &  211.8 \\
	(35,2)	&  6 &  0.14 &  516.3 \\
	(36,2)	&  5 &  0.09 &  359.4 \\
	(38,2)	&  2 &  0.11 &  398.0 \\
	(40,2)	&  7 &  0.34 &  1287.3 \\
	(42,2)	&  10 &  0.60 &  2265.0 \\
	(44,2)	&  2 &  0.06 &  241.0 \\
	(46,2)	&  1 &  0.02 &  65.0 \\
	(48,2)	&  8 &  0.21 &  798.0 \\
	(49,2)	&  10 &  1.36 &  5031.0 \\
	(50,2)	&  2 &  0.05 &  201.5 \\
	(52,2)	&  0 &  {-} &  {-} \\
	(54,2)	&  3 &  0.14 &  491.7 \\
	(56,2)	&  8 &  0.29 &  1098.4 \\
	(58,2)	&  3 &  0.01 &  44.3 \\
	(60,2)	&  3 &  0.28 &  1082.0 \\
	(39,3)	&  10 &  5.72 &  22158.7 \\
	(48,3)	&  1 &  13.29 &  52189.0 \\
	(52,3)	&  10 &  3.92 &  14888.2 \\
	(31,4)	&  10 &  422.45 &  1652410.0 \\
	(42,4)	&  10 &  132.12 &  504622.0 \\
	(56,4)	&  10 &  59.63 &  225106.0 \\
	(31,5)	&  10 &  23.10 &  90731.5 \\
	(33,5)	&  10 &  334.83 &  1306620.0 \\
	(48,6)	&  8 &  607.04 &  2365024.0 \\
	(52,6)	&  3 &  2314.49 &  8309650.0 \\
	(57,7)	&  2 &  482.54 &  1812060.0 \\
	\hhline{-||---}
    \multicolumn{4}{c}{}
\end{tabular}
\end{adjustbox}
\end{table}
\end{appendices}

\end{document}